\newcounter{count}
\newtheoremstyle{bthm}{\baselineskip}{\baselineskip}{\slshape}{}{\bfseries}{}{ }{}
\newtheoremstyle{bex}{\baselineskip}{\baselineskip}{}{}{\sffamily}{:}{\newline }{}
\theoremstyle{bthm}
\newtheorem{theorem}{Theorem}[section]
\newtheorem{corollary}[theorem]{Corollary}
\newtheorem{question}[theorem]{Question}
\newtheorem{lemma}[theorem]{Lemma}
\newtheorem{proposition}[theorem]{Proposition}
\theoremstyle{bex}
\begin{document}
\begin{titlepage}
\title{An optimal chromatic bound for the class of $\{P_3\cup 2K_1,\overline{P_3\cup 2K_1}\}$-free graphs}
\author{Athmakoori Prashant$^{1,}$\footnote{The author's research was supported by the Council of Scientific and Industrial Research,  Government of India, File No: 09/559(0133)/2019-EMR-I.} and S. Francis Raj$^{2}$}
\date{{\footnotesize Department of Mathematics, Pondicherry University, Puducherry-605014, India.}\\
{\footnotesize$^{1}$: 11994prashant@gmail.com\, $^{2}$: francisraj\_s@pondiuni.ac.in\ }}
\maketitle
\renewcommand{\baselinestretch}{1.3}\normalsize
\begin{abstract}
In 1987, A. Gy\'arf\'as in his paper ``Problems from the world surrounding perfect graphs'' posed the problem of determining the smallest $\chi$-binding function for $\mathcal{G}(F,\overline{F})$, when $\mathcal{G}(F)$ is $\chi$-bounded.
So far the problem has been attempted for only forest $F$ with four or five vertices.
In this paper, we address the case when $F=P_3\cup 2K_1$ and show that if $G$ is a $\{P_3\cup 2K_1,\overline{P_3\cup 2K_1}\}$-free graph with $\omega(G)\neq 3$, then it admits $\omega(G)+1$ as a $\chi$-binding function.
Moreover, we also construct examples to show  that this bound is tight for all values of $\omega\neq 3$.
\end{abstract}
\noindent
\textbf{Key Words:} Chromatic number, $\chi$-binding function and $\chi$-bounded. \\
\textbf{2000 AMS Subject Classification:} 05C15, 05C75
\section{Introduction and Motivation}\label{intro}
All graphs considered in this paper are finite and without loops or parallel edges.
For notations and terminology not mentioned here we refer to \cite{bondy2008graph}.
For any positive integer $k$, a proper coloring using $k$-colors is a mapping $f:V(G)\rightarrow \{1,2,\ldots,k\}$ such that $f(u)\neq f(v)$, whenever $uv\in E(G)$. A graph $G$ is said to be \emph{$k$-colorable} if there exists a proper coloring using $k$-colors. The smallest $k$ for which $G$ is $k$-colorable is known as the \emph{chromatic number} of $G$ and is denoted by $\chi(G)$.
For any graph $G$, we write  $G'\sqsubseteq G$ if $G'$ is an induced subgraph of $G$. We say that a graph $G$ is \emph{perfect} if $\chi(G')=\omega(G')$ for every $G'\sqsubseteq G$.\\
\\
\textbf{Gy\'arf\'as's  Question}\\
Given a family of graphs $\mathcal{F}$, a graph $G$ is said to be \emph{$\mathcal{F}$}-free if it contains no induced subgraph which is isomorphic to a graph in $\mathcal{F}$.
Let $\mathcal{G(F)}$ denote the family of $\mathcal{F}$-free graphs.
A family of graphs $\mathcal{G}$ is said to be \emph{$\chi$-bounded} with a $\chi$-binding function $f$, if $\chi(G')\leq f(\omega(G'))$ holds whenever $G'$ is an induced subgraph of $G\in \mathcal{G}$.
A. Gy\'arf\'as in his seminal paper \cite{gyarfas1987problems}, introduced the concept of $\chi$-bound and $\chi$-binding function to relax the concept of perfection.
In \cite{gyarfas1987problems}, he also conjectured that if $F$ is a forest then $\mathcal{G}(F)$ is $\chi$-bounded and proved the conjecture for $P_k$-free graph.
S. Gravier et al. in \cite{gravier2003coloring}, slightly improved the bound given by A. Gy\'arf\'as and showed that if $G$ is a $P_k$-free graph then $\chi(G)\leq (k-2)^{\omega(G)-1}$.
Since $(P_3\cup 2K_1)\sqsubseteq P_7$, it follows from the preceeding result that every $(P_3\cup 2K_1)$-free graph $G$ satisfies $\chi(G)\leq 5^{\omega(G)-1}$, an exponential bound.
A. Gy\'arf\'as has also proposed a number of problems and conjectures that are still open and has received wide attention since its introduction. See for instance, \cite{schiermeyer2019polynomial,scott2020survey,randerath2004vertex}. In this paper, we shall study one of these problems which is mentioned below.  
\begin{question}\cite{gyarfas1987problems}
For a fixed forest $F$, what is the optimal $\chi$-binding function for $\mathcal{G}(F,\overline{F})$, when $\mathcal{G}(F)$ is $\chi$-bounded.
\end{question}
Question 1.1 is still open and seems to be hard even when $F$ is a simple type of forest.
So far the optimal $\chi$-binding function for the class of $\{F,\overline{F}\}$-free graphs has been found for forest $F$ on four or five vertices. 
The details of the known bounds are listed in Table \ref{mytable}.
One can see that when $F$ is a forest on four vertices, $4K_1$ is the only graph for which optimal $\chi$-bound has to be determined and for five vertices, $K_1+4K_1, (K_1+3K_1)\cup K_1, P_3\cup 2K_1, 2K_2\cup K_1, P_2\cup 3K_1, 5K_1$ are the graphs for which the optimal $\chi$-bound is yet to be determined.

\begin{table}
\centering
\scriptsize
\setlength{\tabcolsep}{40pt}
\begin{tabular}{|c c c|}
\hline
F & $\chi$-bound for $\{F,\overline{F}\}$-free graphs & \\
\hline
$K_2\cup 2K_1$ & $\max\{3,\omega\}$ \cite{gyarfas1987problems} & optimal \\
$2K_2$ & $\omega+1$ \cite{gyarfas1987problems} & optimal\\
$P_3\cup K_1$ & $\max\{3,\omega\}$ \cite{gyarfas1987problems} & optimal \\
$K_1+3K_1$ & $\lfloor\frac{3\omega(G)}{2}\rfloor$ \cite{gyarfas1987problems} & optimal \\
$P_4$ & $\omega$ \cite{gyarfas1987problems} & optimal \\
$P_5$& $\binom{\omega+1}{2}$ \cite{fouquet1995graphs} & not optimal \\
$P_4\cup K_1$& $\lceil\frac{5\omega(G)}{4}\rceil$ \cite{karthick2018coloring} & optimal \\
$fork$& $2\omega$ \cite{chudnovsky2020excluding} & asymptotically tight\\
$P_3\cup P_2$& $\max\{\omega(G)+3,\lfloor\frac{3\omega(G)}{2}\rfloor-1\}$ \cite {char2022optimal} & optimal\\
\hline
\end{tabular}
\caption{The table denotes the $\chi$-binding functions for the class of $(F,\overline{F})$-free graphs.}
\label{mytable}
\end{table}

Our goal in this paper is to find the smallest $\chi$-binding function for the class of $(F,\overline{F})$-free graphs, when $F=P_3\cup 2K_1$. 
In particular, we shall show that if $G$ is a $\{P_3\cup 2K_1,\overline{P_3\cup 2K_1}\}$-free graph with $\omega(G)\neq 3$, then $\chi(G)\leq \omega+1$. When $\omega=3$ we have shown that $\chi(G)\leq 7$. 
Also, we have constructed examples to establish that this bound is tight for every $\omega\neq 3$ and give an example of a $\{P_3\cup 2K_1,\overline{P_3\cup 2K_1}\}$-free graph with $\omega(G)=3$ and $\chi(G)=6$.   
The results in this paper use a partition of the vertex set which was initially defined by S. Wagon in \cite{wagon1980bound} and later modified by A. P. Bharathi and S. A. Choudum  in \cite{bharathi2018colouring}. This  has been given in Section \ref{2}.

Some of the graphs that are considered as forbidden induced subgraphs in this paper are given in Figure \ref{somesplgraphs}.
\begin{figure}[t]
	\centering
		\includegraphics[width=0.6\textwidth]{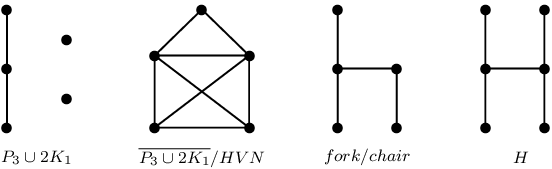}
	\caption{Some special graphs}
	\label{somesplgraphs}
\end{figure}

\section{Notations and Preliminaries}\label{2}
The vertex set, edge set, complement, \emph{clique number} and the \emph{independence number} of a graph $G$ are denoted by $V(G)$, $E(G)$, $\overline{G}$, $\omega(G)$ and $\alpha(G)$ respectively. When there is no ambiguity, $\omega(G)$ will be denoted by $\omega$.
For $T,S\subseteq V(G)$, let $N_S(T)=N(T)\cap S$ (where $N(T)$ denotes the set of all neighbors of $T$ in $G$), let $\langle T\rangle$ denote the subgraph induced by $T$ in $G$ and let $[T,S]$ denote the set of all edges in $G$ with one end in $T$ and the other end in $S$. 
If every vertex in $T$ is adjacent with every vertex in $S$, then $ [T, S ]$ is said to be \emph{complete}. 
For any positive integer $k$, let $[k]$ denote the set $\{1,2,\ldots k\}$.
The \emph{lexicographic ordering} on the set $L=\{(i, j): 1 \leq i < j \leq \omega\}$ is defined in the following way.
For two distinct elements $(i_1,j_1),(i_2,j_2)\in L$, we say that $(i_1,j_1)$ precedes $(i_2,j_2)$, denoted by $(i_1,j_1)<_L(i_2,j_2)$ if either $i_1<i_2$ or $i_1=i_2$ and $j_1<j_2$.
We define the following sets to partition our vertex set $V(G)$.\\
(1) $A=\{v_1,v_2,\ldots,v_{\omega}\}$, a maximum clique of $G$, \\
(2) For any $k\in [\omega]$,\\
\phantom{(2) }$I_k=\{v\in V(G)\backslash A: v\in N(v_i), \text{\ for\ every}\ i\in[\omega]\backslash\{k\}\}$ \\
\phantom{(2) $I_k$} $=\{v\in V(G)\backslash A: v\notin N(v_i) , \text{\ only\ for\ }\ i=k\}$,  and \\
(3) $C_{i,j}=\{v\in V(G)\backslash A:v\notin N(v_i)$ and $\ v\notin N(v_j)\}\backslash\left\{\mathop\cup\limits_{(i',j')<_L(i,j)} C_{i',j'}\right\}$\\
\phantom{(3) $C_{i,j} $}$=\{v\in V(G)\backslash A: i\ \&\ j\ \textnormal{are the least distinct integers}\ \textnormal{such that}\ v\notin N(v_i)$ and $\ v\notin N(v_j)  \}$  \\
Without much difficulty, one can observe the following.\\
(i) For every $k,l\in [\omega]$, $I_k$ is an independent set and $I_k\cap I_l=\emptyset$.\\
(ii) $C_{i,j}\cap C_{i',j'}=\emptyset$ whenever $(i,j)\neq (i',j')$.\\
(iii) $I_k\cap C_{i,j}=\emptyset$, for any $(i,j)\in L$ and $k\in [\omega]$\\
(iv) For every $(i,j)\in L$, if a vertex $a\in C_{i,j}$, then $N_A(a)\supseteq \{v_1,v_2,\ldots,v_j\}\backslash\{v_i,v_j\}$.\\
(v) $V(G)=A\cup\left(\mathop\cup\limits_{k=1}^{\omega}I_k\right)\cup\left(\mathop\cup\limits_{(i,j)\in L}C_{i,j}\right)$.

We can also write  $V(G)=V_1\cup V_2$, where $V_1=\mathop\cup\limits_{1\leq k\leq\omega}\Big((\{v_k\}\cup I_k)=U_k \textnormal{ (say)} \Big)$ and $V_2=\mathop\cup\limits_{(i,j)\in L}C_{i,j}$.
We can further partition $V_2$ using the same partitioning method as defined above and write $V_2= A'\cup\left(\mathop\cup\limits_{k=1}^{|A'|}I'_k\right)\cup\left(\mathop\cup\limits_{(i,j)\in L'}C'_{i,j}\right)$, where $\omega(\langle V_2\rangle)=|A'|$, $A'=\{v'_1,v'_2,\ldots,v'_{|A'|}\}$ and $L'=\{(i, j): 1 \leq i < j \leq |A'|\}$.
The other properties and definitions for $I'_k$'s, $C'_{i,j}$'s and $L'$ are same as that of $I_k$'s, $C_{i,j}$'s and $L$ respectively.
Let $X_1=V_1$, $X_2=A'\cup\left(\mathop\cup\limits_{k=1}^{|A'|}I'_k\right)$ and $X_3=\mathop\cup\limits_{(i,j)\in L'}C'_{i,j}$.
Throughout this paper, we shall use the above mentioned partitions $V(G)=V_1\cup V_2$ and $V(G)=X_1\cup X_2\cup X_3$.

\section{$\chi$-binding function for $\{P_3\cup 2K_1,\overline{P_3\cup 2K_1}\}$-free graphs}

We begin this section by  recalling some of the results in \cite{chudnovsky2006strong,caldam2022chromaticspringerversion,randerathvizing,randerath2004}.
Note that when $p=2$, $(K_1\cup K_2)+K_p\cong \overline{P_3\cup 2K_1}$.
\begin{proposition}\cite{caldam2022chromaticspringerversion}\label{k1k2kpprop}
Let $G$ be a $(\overline{P_3\cup 2K_1}$)-free graph with $\omega(G)\geq 4$.
Then $G$ satisfies the following.
\begin{enumerate}[(i)]
\setlength\itemsep{-1pt}
\item \label{completeIkIl} For $k,\ell\in [\omega]$, $[I_k,I_{\ell}]$ is complete. Thus $\langle V_1\rangle$ is a complete multipartite graph with  $U_{t}$'s as its partitions.

\item \label{emptycij} For $j\geq 4$ and $1\leq i<j$, $C_{i,j}=\emptyset$. As a consequence, $V_2=C_{1,2}\cup C_{1,3}\cup C_{2,3}$.

\item \label{p-1neighbors} For any $x\in V_2$, $x$ has neighbors in  at most one $U_\ell$, where $\ell\in [\omega]$.
\end{enumerate}
\end{proposition}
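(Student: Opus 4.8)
The plan is to derive all three parts from the structure of $\overline{P_3\cup 2K_1}$ together with the maximality of the clique $A$. Recall from the paper's note that $\overline{P_3\cup 2K_1}\cong (K_1\cup K_2)+K_2$; concretely it is a $K_4$ with a fifth vertex joined to exactly one edge of that $K_4$. Equivalently, a set of five vertices $\{z,w_1,w_2,u_1,u_2\}$ induces $\overline{P_3\cup 2K_1}$ exactly when $z$ is non-adjacent to both $w_1$ and $w_2$ while all eight remaining pairs are edges (so $w_1w_2$ is an edge and $u_1,u_2$ are adjacent to the other four vertices). I will call $z$ the \emph{apex} and $u_1,u_2$ the \emph{universal} vertices. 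The whole argument consists of locating, whenever the desired conclusion fails, either such an apex gadget or a clique of size $\omega+1$; the hypothesis $\omega\geq 4$ enters throughout to guarantee that enough spare vertices of $A$ remain to serve as $u_1,u_2$.

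For part (i) the only non-trivial assertion is that $[I_s,I_t]$ is complete for $s\neq t$ (that each $U_t$ is independent and that the clique edges and the edges $v_s I_t$ are present is immediate from the definitions, and yields the complete multipartite description of $\langle V_1\rangle$). So suppose $x\in I_s$, $y\in I_t$ with $s\neq t$ and $xy\notin E(G)$. Then $x\not\sim v_s$ and $x\not\sim y$, while $v_s\sim y$ since $y\in I_t$ and $t\neq s$. Taking the apex $z=x$ with $w_1=v_s$, $w_2=y$, and choosing $u_1,u_2$ to be any two vertices of $A\setminus\{v_s,v_t\}$ (which exist because $\omega\geq 4$), the set $\{x,v_s,y,u_1,u_2\}$ induces $\overline{P_3\cup 2K_1}$, a contradiction. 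For part (ii), let $x\in C_{i,j}$ with $j\geq 4$. By the minimality built into the definition of $C_{i,j}$, the indices $i,j$ are the two smallest indices of non-neighbours of $x$ in $A$, so $x\not\sim v_i$, $x\not\sim v_j$, and $x\sim v_s$ for every $s<j$ with $s\neq i$. The set $\{v_s:s<j,\ s\neq i\}$ has at least $j-2\geq 2$ elements; choosing two of them as $u_1,u_2$ and setting $z=x$, $w_1=v_i$, $w_2=v_j$ again produces an induced $\overline{P_3\cup 2K_1}$. Hence $C_{i,j}=\emptyset$ whenever $j\geq 4$, and since $(i,j)\in L$ forces $i<j$, only $C_{1,2},C_{1,3},C_{2,3}$ survive.

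Part (iii) is where the real work lies. Suppose $x\in V_2$ has a neighbour $p\in U_\ell$ and a neighbour $q\in U_m$ with $\ell\neq m$; part (i) then forces $p\sim q$. Write $S=\{s\in[\omega]:v_s\not\sim x\}$, so that $|S|\geq 2$ since $x\in V_2$, and note that $p$ has at most one non-neighbour in $A$ (possibly $v_\ell$) and $q$ at most one (possibly $v_m$). I would split into three cases. If $S=\{\ell,m\}$, then $\{x,p,q\}\cup\{v_s:s\notin S\}$ is pairwise adjacent and hence a clique of size $\omega+1$, contradicting the maximality of $A$. If instead $S\setminus\{\ell,m\}$ contains two indices $a,b$, the apex gadget with $z=x$, $w_1=v_a$, $w_2=v_b$, $u_1=p$, $u_2=q$ gives the forbidden subgraph. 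In the only remaining case one checks $|S\setminus\{\ell,m\}|=1$, whence $|S|\leq 3$ and at least one of $\ell,m$ lies in $S$, say $\ell$; then $p\in I_\ell$ because $x\not\sim v_\ell$. Here I take the apex to be the clique vertex $z=v_\ell$, with $w_1=x$, $w_2=p$, $u_1=q$, and $u_2=v_t$ for any index $t\notin S\cup\{m\}$. A short count gives $|S\cup\{m\}|\leq 3$, so such a $v_t$ exists precisely because $\omega\geq 4$, and $\{v_\ell,x,p,q,v_t\}$ induces $\overline{P_3\cup 2K_1}$ (the roles of $\ell,m$ and of $p,q$ being symmetric).

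The main obstacle is the bookkeeping in part (iii): one must check that the three cases are exhaustive and, in the last case, that the overlap of $S$ with $\{\ell,m\}$ is small enough ($|S\cup\{m\}|\leq 3$) for a spare clique vertex $v_t$ to survive. This is exactly the point at which the hypothesis $\omega\geq 4$, rather than merely $\omega\geq 3$, becomes essential, which is consistent with the exceptional behaviour of $\omega=3$ highlighted in the introduction. By contrast, parts (i) and (ii) are comparatively routine once the apex-gadget viewpoint is adopted.
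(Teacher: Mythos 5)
Your proof is correct. Note, however, that the paper does not prove Proposition \ref{k1k2kpprop} at all: it is imported verbatim from the cited reference \cite{caldam2022chromaticspringerversion} (as a special case, $p=2$, of a statement about $(K_1\cup K_2)+K_p$-free graphs), so there is no in-paper argument to compare against. Judged on its own, your ``apex gadget'' formulation of $\overline{P_3\cup 2K_1}$ is accurate, parts (i) and (ii) are handled exactly as one would expect, and the three-way case split in part (iii) on $|S\setminus\{\ell,m\}|$ is exhaustive; I checked the bound $|S\cup\{m\}|\leq 3$ in your last case and the existence of the spare vertex $v_t$ does follow from $\omega\geq 4$. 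One small simplification worth noting: since any $x\in V_2$ has at least two non-neighbours in $A$, the gadget applied entirely inside $A\cup\{x\}$ shows immediately that $|N_A(x)|\leq 1$ when $\omega\geq 4$; this disposes of your Case A (where $x$ would have $\omega-2\geq 2$ neighbours in $A$) without invoking the $(\omega+1)$-clique, though your clique argument is also valid. The remaining two cases, where neighbours of $x$ in $V_1$ lie in the independent sets $I_\ell$, $I_m$ rather than in $A$, are genuinely needed and you handle them correctly.
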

\begin{theorem}\cite{chudnovsky2006strong}\label{spgt}
A graph $G$ is perfect if and only if it does not contain $C_{2n+1}$ and $\overline{C_{2n+1}}$ ($n\geq 2$) as induced subgraphs.
\end{theorem}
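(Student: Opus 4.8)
This is the Strong Perfect Graph Theorem, so the statement itself is famous and the two implications are of wildly different difficulty; I would treat them separately and be candid that only the easy direction admits a short self-contained argument.

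The forward direction I would dispatch first by an elementary observation: every odd hole $C_{2n+1}$ and every odd antihole $\overline{C_{2n+1}}$ with $n\geq 2$ is itself imperfect. Indeed, for the hole $\omega=2$ while $\chi=3$, and for the antihole $\omega(\overline{C_{2n+1}})=\alpha(C_{2n+1})=n$ while $\chi(\overline{C_{2n+1}})$ equals the clique-cover number $n+1$ of $C_{2n+1}$. Since perfection is, by the very definition $\chi(G')=\omega(G')$ for all $G'\sqsubseteq G$, inherited by every induced subgraph, a perfect graph can contain no imperfect induced subgraph, and in particular no odd hole or odd antihole. This settles the ``only if'' part.

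The reverse direction — that every graph with neither an odd hole nor an odd antihole (a \emph{Berge} graph) is perfect — is the theorem proper, and here I would follow the structural program of Chudnovsky, Robertson, Seymour and Thomas rather than attempt anything ab initio. The plan is to prove a decomposition theorem asserting that every Berge graph either is \emph{basic} — that is, lies in one of the classes of bipartite graphs, complements of bipartite graphs, line graphs of bipartite graphs, complements of such line graphs, or double split graphs — or else $G$ or $\overline{G}$ admits a (proper) $2$-join, or $G$ admits a balanced skew partition. One then argues by minimal counterexample: a minimal imperfect Berge graph cannot be basic, since each basic class is directly checked to be perfect; and it cannot admit any of the listed decompositions, because minimal imperfect graphs possess no $2$-join (the $2$-join can be uncrossed, after Cornuéjols and Cunningham) and no balanced skew partition (one of the principal lemmas proved within the argument, settling the relevant case of Chv\'atal's skew-partition conjecture). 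Combining the decomposition theorem with these non-existence lemmas leaves no room for a minimal counterexample, and the induction closes.

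The overwhelming obstacle is the decomposition theorem itself, and I would not expect to improve on the published proof. Establishing it requires an enormous and delicately organised case analysis: one fixes a well-chosen configuration inside the Berge graph — for example a prism, a suitable long hole, or the certified absence of such objects — and then analyses, by repeated parity arguments on chordless paths, exactly how the remaining vertices may attach to it without creating an odd hole or odd antihole. Controlling the interaction of these configurations, and in particular eliminating the historically troublesome homogeneous-pair decomposition so that only the three decompositions above survive, is where essentially all the difficulty resides. Accordingly, for the purposes of the present paper I would simply invoke the theorem as stated and cited, using it only as a black box to certify perfection of the specific graphs that arise later.
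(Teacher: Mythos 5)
The paper does not prove this statement at all --- it is the Strong Perfect Graph Theorem, imported verbatim from Chudnovsky, Robertson, Seymour and Thomas and used only as a black box --- and your proposal ends up in exactly the same place, correctly deferring the hard direction to the cited decomposition-theorem proof. Your self-contained argument for the easy direction (odd holes have $\chi=3>2=\omega$, odd antiholes have $\chi=n+1>n=\omega$, and perfection is hereditary) is correct, so there is nothing to object to.
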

\begin{theorem}\cite{randerathvizing}\label{chair}
If $G$ is a $\{chair,\overline{P_3\cup 2K_1}\}$-free graph, then $\chi(G)\leq \omega+1$.
\end{theorem}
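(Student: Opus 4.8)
The plan is to build an explicit proper colouring of $G$ that uses the maximum clique $A$ as a skeleton and spends only one colour beyond the $\omega$ that $A$ forces. The cases $\omega\le 3$ (where $\omega+1\le 4$) are small and I would treat them separately, since the structural description I want to invoke needs a large clique; so assume $\omega\ge 4$ and apply Proposition \ref{k1k2kpprop}. Its first conclusion says $\langle V_1\rangle$ is complete multipartite with parts $U_1,\dots,U_\omega$, so assigning colour $k$ to all of $U_k$ is a proper $\omega$-colouring of $\langle V_1\rangle$. Its second conclusion reduces the rest of the graph to $V_2=C_{1,2}\cup C_{1,3}\cup C_{2,3}$, and its third conclusion states that every $x\in V_2$ has neighbours in at most one part $U_\ell$.

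The crux of the easy half is a colour-inheritance observation. Take $x\in C_{i,j}$, so $x$ is non-adjacent to both $v_i$ and $v_j$. If all $V_1$-neighbours of $x$ lie in a single $U_\ell$ with $\ell\ne i$, then $x$ misses every vertex of $U_i$ and may safely be given colour $i$; symmetrically it may be given colour $j$ whenever $\ell\ne j$; and if $x$ has no neighbour in $V_1$ it may take either. Introducing one fresh colour $\omega+1$, which is used on no vertex of $V_1$, it follows that every $x\in C_{1,2}$ admits a list of admissible colours of size at least two inside $\{1,2,\omega+1\}$, every $x\in C_{1,3}$ a list inside $\{1,3,\omega+1\}$, and every $x\in C_{2,3}$ a list inside $\{2,3,\omega+1\}$, with no possible conflict against $V_1$. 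Hence the whole task collapses to a \emph{list-colouring} of $\langle V_2\rangle$ from the four-colour palette $\{1,2,3,\omega+1\}$ in which every vertex has a list of size at least two; any such list-colouring pastes together with the colouring of $V_1$ to give a proper $(\omega+1)$-colouring of $G$.

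The genuinely hard step, and the one for which chair-freeness is indispensable, is to prove that this constrained list-colouring always exists. First one must bound the cliques inside each $C_{i,j}$: a $K_4$ inside $C_{1,2}$ would demand four distinct colours from the three-element pool $\{1,2,\omega+1\}$ and would be fatal, so it has to be excluded — partly from $\omega(G)=\omega$ (since $C_{1,2}$-vertices are, by definition, forced to be adjacent to most of $v_3,\dots,v_\omega$) and partly from forbidding an induced chair. Next one must pin down the edges within and between $C_{1,2},C_{1,3}$ and $C_{2,3}$ — for instance deciding whether $[C_{1,2},C_{1,3}]$ is complete, empty, or otherwise rigidly structured — and rule out the obstructions to a size-two list-colouring, the worst of which is an odd cycle all of whose vertices carry the same two-element list. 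I expect essentially all of the effort, and the repeated exhibition of an induced chair to force contradictions, to sit precisely in this structural analysis of $\langle V_2\rangle$ and its attachment to $A$; the inheritance reduction is the clean part, whereas controlling $\langle V_2\rangle$ tightly enough that the two-element lists are always realisable is the main obstacle.
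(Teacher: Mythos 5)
This statement is not proved in the paper at all: Theorem~\ref{chair} is imported verbatim from \cite{randerathvizing} and used as a black box, so there is no in-paper argument to compare yours against. Judged on its own terms, your proposal is a plan rather than a proof, and the gap is exactly where you say it is. The reduction in your second paragraph (colour $U_k$ with $k$, observe that each $x\in C_{i,j}$ inherits a list of size at least two inside $\{i,j,\omega+1\}$, and reduce to a list-colouring of $\langle V_2\rangle$) is fine as far as it goes, but the theorem's entire content now sits in the claim that this list-colouring is always feasible, and you explicitly defer that: you never exhibit a single induced chair, never establish any structure on $\langle C_{1,2}\cup C_{1,3}\cup C_{2,3}\rangle$, and never rule out the fatal configurations you yourself identify (a triangle in $C_{1,2}$ all of whose vertices have a neighbour in $I_1$ already forces three colours out of the two-element pool $\{2,\omega+1\}$). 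A proof that names its hardest step and then says ``I expect the effort to sit here'' has not been carried out.

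Two further concrete problems. First, your parenthetical justification that a $K_4$ inside $C_{1,2}$ is excluded ``since $C_{1,2}$-vertices are, by definition, forced to be adjacent to most of $v_3,\dots,v_\omega$'' is false: observation (iv) of Section~\ref{2} gives $N_A(a)\supseteq\{v_1,\dots,v_j\}\setminus\{v_i,v_j\}$ for $a\in C_{i,j}$, which for $C_{1,2}$ (where $j=2$) is vacuous, so membership in $C_{1,2}$ imposes no adjacency to $A$ whatsoever and the clique bound you want does not follow from the definition. Second, the cases $\omega\le 3$ are not ``small'' in any useful sense: for $\omega=2$ the claim is that every triangle-free $\{chair,\overline{P_3\cup 2K_1}\}$-free graph is $3$-colourable, and for $\omega=3$ that it is $4$-colourable; neither is automatic (compare the genuine work the paper must do in Theorem~\ref{3case} for $\omega=3$ even with the stronger hypothesis of $(P_3\cup 2K_1)$-freeness), and Proposition~\ref{k1k2kpprop} is unavailable there, so you would need an entirely separate argument that you have not sketched.
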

\begin{theorem}\cite{randerath2004}\label{3color}
If $G$ is a triangle-free and $H$-free graph, then $\chi(G)\leq 3$.
\end{theorem}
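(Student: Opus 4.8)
The statement concerns $3$-colourability of triangle-free graphs, where the only extra hypothesis is the exclusion of a single graph $H$ drawn in Figure \ref{somesplgraphs}. For the statement to be meaningful, $H$ must itself be triangle-free: if $H$ contained a triangle, then every triangle-free graph would be $H$-free and the claim would read ``every triangle-free graph is $3$-colourable'', which is false by Mycielski's construction. Thus the entire force of the result lies in how forbidding the particular triangle-free graph $H$ destroys every triangle-free example of chromatic number at least $4$. The plan is therefore to argue by contradiction through a minimal obstruction: assume some graph in the class is not $3$-colourable and pass to a $4$-vertex-critical induced subgraph $G$, that is, a graph with $\chi(G)=4$ in which deleting any vertex drops the chromatic number to $3$. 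Such a $G$ remains triangle-free and $H$-free, and the task reduces to proving that no such $G$ can exist.

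First I would collect the structural constraints forced by $4$-criticality together with girth at least $4$: the graph $G$ is $2$-connected, has minimum degree at least $3$, and the neighbourhood $N(v)$ of every vertex is an independent set (by triangle-freeness). In particular, each vertex $v$ together with three of its neighbours already induces a claw $K_{1,3}$, and two suitable non-adjacent vertices at distance two supply the remaining isolated vertices needed to build larger triangle-free patterns such as $P_3\cup 2K_1$. A useful preliminary reduction is Theorem \ref{spgt}: for a triangle-free graph the only obstructions to perfection are induced odd holes $C_5,C_7,C_9,\dots$, since $\overline{C_{2n+1}}$ contains a triangle for $n\ge 3$ and $\overline{C_5}=C_5$. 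Hence a triangle-free $H$-free graph with no odd hole is perfect and therefore satisfies $\chi=\omega\le 2$; this lets me restrict the critical analysis to graphs that genuinely contain a $C_5$ or a longer odd hole, around which the copy of $H$ will be located.

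The combinatorial heart is then a local analysis around a well-chosen vertex $v$ of minimum degree. By criticality $G-v$ admits a proper $3$-colouring, and in \emph{every} such colouring the set $N(v)$ must receive all three colours, for otherwise the colouring would extend to $v$. Since $N(v)$ is independent of size at least $3$, this pins down how colours distribute over the neighbourhood and forces specific adjacencies and non-adjacencies among the neighbours of $v$ and their own neighbours. I would then read these forced edges and non-edges off inside the triangle-free host and show that they assemble an induced copy of $H$, contradicting $H$-freeness.

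The step I expect to be the main obstacle is precisely this last matching: verifying that the configuration produced is \emph{induced}, with no stray edge spoiling it, requires a careful case distinction according to the colour pattern on $N(v)$ and the adjacency types of the second-neighbours. Because there are infinitely many $4$-critical triangle-free graphs, the location of $H$ cannot be settled by enumeration and must be driven entirely by the colour-forced local structure; the Grötzsch graph is a convenient guiding example on which to confirm that the chosen $H$ indeed occurs, and serves as the natural base case against which the general structural argument should be calibrated.
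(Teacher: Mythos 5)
This statement is quoted in the paper as an external result from \cite{randerath2004}; the paper itself gives no proof of it, so there is nothing internal to compare your attempt against. Judged on its own terms, however, your proposal is not a proof but a proof \emph{plan}, and the plan stops exactly where the theorem actually lives. The reductions you set up are all correct and standard: passing to a $4$-vertex-critical induced subgraph, noting it is $2$-connected with minimum degree at least $3$ and independent neighbourhoods, observing via Theorem \ref{spgt} that a triangle-free graph with no odd hole is perfect and hence bipartite, and recording that in every $3$-colouring of $G-v$ all three colours must appear on $N(v)$. None of this uses the hypothesis of $H$-freeness, and none of it is specific to $H$; the same paragraphs could be written verbatim for any forbidden triangle-free subgraph $F$, including choices of $F$ for which the conclusion is false. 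The entire content of the theorem is the step you label ``the main obstacle'': exhibiting an induced copy of the $H$-graph inside an arbitrary $4$-critical triangle-free graph. You describe this step as requiring ``a careful case distinction'' but do not carry out a single case, do not specify which vertices of the colour-forced configuration play which roles in $H$, and do not verify inducedness anywhere. Even your proposed sanity check --- that the Gr\"otzsch graph contains an induced $H$ --- is asserted as something to be confirmed rather than confirmed.

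Concretely, the gap is this: you have reduced the theorem to the claim ``every $4$-critical triangle-free graph contains an induced $H$,'' which is essentially a restatement of the theorem itself, and then offered no argument for that claim. Randerath's actual proof of this result is a substantial structural analysis (it is one of the harder entries in the programme of characterizing pairs of forbidden subgraphs forcing $3$-colourability), and there is no reason to expect the local colour-distribution argument around a single minimum-degree vertex to suffice as stated, since the forced structure you extract (a claw plus some non-edges at distance two) is far smaller than $H$ and occurs in triangle-free $4$-chromatic graphs of arbitrarily large girth, where no induced $H$ can be located by purely local means without tracking second and third neighbourhoods carefully. As written, the proposal would need the missing case analysis supplied in full before it could be called a proof.
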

For establishing an optimal $\chi$-binding function for $\{P_3\cup 2K_1,\overline{P_3\cup 2K_1}\}$-free graphs with $\omega\geq4$, we begin by observing Lemma \ref{lemma}, Lemma \ref{lemma2}, Theorem \ref{char} and Lemma \ref{lemma3}.
\begin{lemma}\label{lemma}
Let $G$ be a $\{P_3\cup 2K_1,\overline{P_3\cup 2K_1}\}$-free graph with $\omega(G)\geq 4$ and let $X$ be an independent subset of $V_2$ such that $|X|\geq 2$. Then we have the following.  
\begin{enumerate}[(i)]
\item\label{twou} $N_{V_1}(X)\subseteq U_p\cup U_q$, for some $p,q\in [\omega]$.
\item\label{oneu} If $N(X)\cap U_p\neq\emptyset$ and $N(X)\cap U_q\neq\emptyset$, for distinct $p,q\in [\omega]$, then $|X|=2$.
\item\label{twoI} There exist integers $r,s\in [\omega]$ such that $\mathop\cup\limits_{k=1}^{\omega}I_k\subseteq I_r\cup I_s$.
\end{enumerate}
\end{lemma}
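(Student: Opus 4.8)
The plan is to prove all three parts by a single mechanism that trades on the hypothesis $\omega\geq 4$: there are always ``spare'' clique vertices, and I use them to build a forbidden $P_3\cup 2K_1$. The engine is Proposition~\ref{k1k2kpprop}, whose third item says every $x\in V_2$ has its $V_1$-neighbours confined to one class, so whenever $x$ has a $V_1$-neighbour I may write $N_{V_1}(x)\subseteq U_{\ell(x)}$ for a unique index $\ell(x)$. The one consequence I use over and over is immediate: since the $v_m$ lie in pairwise distinct classes, such an $x$ is non-adjacent to every vertex of $U_m$ (in particular to $v_m$) for all $m\neq\ell(x)$, and if $x$ has no $V_1$-neighbour it misses every $v_m$. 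Thus any two independent vertices of $V_2$ jointly miss $v_m$ for all but at most two indices $m$, and these ``free'' clique vertices are exactly what serve as the far end of a $P_3$.

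For part (i), I would argue by contradiction: suppose three distinct classes $U_p,U_q,U_r$ meet $N_{V_1}(X)$. By confinement these witnesses come from three distinct vertices $x_1,x_2,x_3\in X$, with $a\in U_p\cap N(x_1)$ and $x_2,x_3$ attaching to $U_q,U_r$. Choosing $s\in[\omega]\setminus\{p,q,r\}$ (possible since $\omega\geq 4$), one checks that $\langle\{x_1,a,v_s\}\rangle$ is a $P_3$ centred at $a$ (using $a v_s\in E(G)$ as $s\neq p$, and $x_1 v_s\notin E(G)$), while $x_2,x_3$ are non-adjacent to each of $x_1,a,v_s$ and to one another. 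Hence $\langle\{x_1,a,v_s,x_2,x_3\}\rangle\cong P_3\cup 2K_1$, contradicting $P_3\cup 2K_1$-freeness; so at most two classes are met.

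For part (ii), assume both $U_p$ and $U_q$ meet $N(X)$ with $p\neq q$ and suppose $|X|\geq 3$. Fix $x_1,x_2\in X$ with $a\in U_p\cap N(x_1)$, $b\in U_q\cap N(x_2)$ (distinct by confinement), a third vertex $x_3\in X$, and $s\in[\omega]\setminus\{p,q\}$. I would split on where $x_3$ attaches. If $x_3$ attaches to $U_p$ or has no $V_1$-neighbour, then $\langle\{x_2,b,v_s\}\rangle$ (a $P_3$ centred at $b$) together with $\{x_1,x_3\}$ induces $P_3\cup 2K_1$; if $x_3$ attaches to $U_q$, then $\langle\{x_1,a,v_s\}\rangle$ together with $\{x_2,x_3\}$ does the same. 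Each case contradicts $P_3\cup 2K_1$-freeness, so $|X|\leq 2$, and with $|X|\geq 2$ this gives $|X|=2$.

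Finally, part (iii) is where the independent pair in $V_2$ is essential: the statement is false without it, since a maximum clique together with one vertex in each of three classes $I_r,I_s,I_t$ is already $\{P_3\cup 2K_1,\overline{P_3\cup 2K_1}\}$-free, so the argument must spend $z_1,z_2\in X$ as the ``$2K_1$''. Suppose $I_r,I_s,I_t$ are all non-empty with representatives $w_r,w_s,w_t$, and put $B=\{\ell(z_1),\ell(z_2)\}$ (size at most two); call $m$ \emph{free} if $m\notin B$, so that $z_1,z_2$ are simultaneously non-adjacent to $v_m$ and to all of $I_m$ for free $m$. Since $\{r,s,t\}$ has three elements it contains a free index, say $t$, and since $\omega\geq 4$ there are at least two free indices, so I may pick a second free index $u\neq t$; then $\langle\{v_t,v_u,w_t\}\rangle$ is a $P_3$ centred at $v_u$ (as $v_t w_t\notin E(G)$ while $v_u v_t,v_u w_t\in E(G)$), and $z_1,z_2$ are non-adjacent to all of it and to each other, giving the forbidden $P_3\cup 2K_1$. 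The routine part throughout is the adjacency bookkeeping; I expect the genuinely delicate points to be (a) recognising that part (iii) cannot be detached from the hypothesis on $X$, which forces the choice of $z_1,z_2$ as the two isolated vertices, and (b) securing in part (iii) a second free index disjoint from the class $I_t$ used for the $P_3$ — this is precisely where $\omega\geq 4$ is consumed, and it is the step I expect to resist any attempt to lower the bound to $\omega=3$.
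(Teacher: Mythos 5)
Your proof is correct and follows essentially the same route as the paper's: confine each $X$-vertex's $V_1$-neighbourhood to a single class via Proposition~\ref{k1k2kpprop}(\ref{p-1neighbors}), use $\omega\geq 4$ to extract a spare clique vertex $v_s$ missed by the relevant vertices of $X$, and assemble a $P_3\cup 2K_1$ with two independent vertices of $X$ serving as the $2K_1$ (your ``free index'' bookkeeping in (iii) is exactly the paper's choice of $t_1$ and $v_s$). The only thing to patch is the case split in part (ii), which omits the possibility that $x_3$ attaches to a class $U_r$ with $r\notin\{p,q\}$, where your chosen $s$ could coincide with $r$; that case is instantly excluded by your part (i), so one line invoking it closes the argument.
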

\begin{proof}
Let $G$ be a $\{P_3\cup 2K_1,\overline{P_3\cup 2K_1}\}$-free graph with $\omega(G)\geq 4$ and let $X$ be an independent subset of $V_2$ such that $|X|\geq 2$.
\begin{enumerate}[(i)]
\item Suppose $N_{V_1}(X)\nsubseteq U_p\cup U_q$, for any $p,q\in [\omega(G)]$, then by (\ref{p-1neighbors}) of Proposition \ref{k1k2kpprop},  there exist three vertices, say $\{a,b,c\}\subseteq X$ such that each of them have neighbors in unique $U_{\ell}$'s, say $\{U_a,U_b,U_c\}$ respectively.
Since $\omega(G)\geq 4$, by (\ref{completeIkIl}) and (\ref{p-1neighbors}) of Proposition \ref{k1k2kpprop}, there exists a vertex $v_s\in A\backslash\{v_a,v_b,v_c\}$ such that $P_3\cup 2K_1\sqsubseteq\langle\{a,U_a,v_s,b,c\}\rangle$, a contradiction.
\item If $N(X)\cap U_p\neq\emptyset$ and $N(X)\cap U_q\neq\emptyset$, for distinct $p,q\in[\omega]$, then by (\ref{p-1neighbors}) of Proposition \ref{k1k2kpprop}, there exist distinct vertices $a,b\in X$ such that $N(a)\cap U_p\neq \emptyset$ and $N(b)\cap U_q\neq \emptyset$.
If $|X|\geq 3$, say $c\in X\backslash \{a,b\}$, then for the same reason either $N(c)\cap U_p=\emptyset$ or $N(c)\cap U_q=\emptyset$.
Without loss of generality, let us assume that $N(c)\cap U_p=\emptyset$.
Then again by (\ref{completeIkIl}) and (\ref{p-1neighbors}) of Proposition \ref{k1k2kpprop}, there exists a vertex $v_s\in A$ such that $P_3\cup 2K_1\sqsubseteq\langle\{a,U_p,v_s,b,c\}\rangle$, a contradiction.
\item For the sake of contradiction, let $t_1,t_2,t_3\in [\omega]$ such that $I_{t_1}\neq \emptyset$, $I_{t_2}\neq \emptyset$ and $I_{t_3}\neq \emptyset$. Let $a,b\in X$. By (\ref{p-1neighbors}) of Proposition \ref{k1k2kpprop},   $[\{a,b\}, U_{t_i}]=\emptyset$ for some $i\in\{1,2,3\}$ (say, $i=1$) and there exists a vertex $v_{s}\in A\backslash\{v_{t_1}\}$ such that $[\{a,b\}, v_s]=\emptyset$.
Hence, by (\ref{completeIkIl}) of Proposition \ref{k1k2kpprop}, $P_3\cup 2K_1\sqsubseteq \langle\{v_s,U_{t_1},a,b\}\rangle$, a contradiction.
\end{enumerate}
\vskip -10mm
\end{proof}

\begin{lemma}\label{lemma2}
Let $G$ be a $\{P_3\cup 2K_1,\overline{P_3\cup 2K_1}\}$-free graph with $\omega(G)\geq 4$. Then   
\begin{enumerate}[(i)]
\item\label{lem21} $\langle V_2\rangle$ is $(P_3\cup K_1)$-free.
\item\label{lem22} $\langle X_3= (C'_{1,2}\cup C'_{1,3}\cup C'_{2,3})\rangle$ is $P_3$-free, a union of cliques.
\item\label{lem23} If $\omega(\langle V_2\rangle)\geq 4$ and $X_3\neq \emptyset$, then there exists a unique $t\in [|A'|]$ such that $\mathop\cup\limits_{r=1}^{|A'|}I_r'\subseteq I_t'$.
\item\label{lem24} If $\omega(\langle X_3\rangle)=\omega(G)$ and $[X_3, X_2]\neq \emptyset$, then $\langle X_3 \rangle$ is a clique of size $\omega(G)$.
\item\label{lem25}If $\omega(\langle X_2\rangle)=\omega(\langle X_3\rangle)=\omega(G)$, then $\langle X_2\rangle$ is a complete multipartite graph and any vertex in $X_i$ is adjacent to exactly $0,1$ or $\omega-1$ partitions in $X_j$, whenever $i<j$.
\end{enumerate}
\end{lemma}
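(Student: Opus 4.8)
The five statements are naturally proved in order, each feeding the next, and the engine throughout is uniform: assume a violation, then exhibit an induced $P_3\cup 2K_1$ or $\overline{P_3\cup 2K_1}$ in $G$, or (once (i) is available) an induced $P_3\cup K_1$ inside $\langle V_2\rangle$. The key structural input is Proposition \ref{k1k2kpprop}(iii): since distinct $v_\ell$ lie in distinct $U_\ell$, every vertex of $V_2$ is adjacent to at most one vertex of the clique $A$; the same statement applied to the induced subgraph $\langle V_2\rangle$ (again $\{P_3\cup 2K_1,\overline{P_3\cup 2K_1}\}$-free) says, whenever $\omega(\langle V_2\rangle)=|A'|\geq 4$, that every vertex of $X_3$ is adjacent to at most one vertex of $A'$. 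I use Proposition \ref{k1k2kpprop} and Lemma \ref{lemma} freely, for both $G$ and $\langle V_2\rangle$. For (i), suppose $\{a,b,c,d\}$ induces $P_3\cup K_1$ in $\langle V_2\rangle$ with path $a\text{-}b\text{-}c$ and isolated $d$. Then $\{a,c,d\}$ is independent of size $3$ in $V_2$, so by Lemma \ref{lemma}(i)--(ii) its $V_1$-neighbourhood lies in a single $U_p$. As $\omega\geq 4$ and $b\in V_2$ meets at most one $U_\ell$, among the $\omega-1\geq 3$ indices $q\neq p$ at least two satisfy $v_q\notin N(b)$; for such $q$, since $v_q\notin U_p$ it misses $a,c,d$ as well, whence $\langle\{a,b,c,d,v_q\}\rangle\cong P_3\cup 2K_1$, a contradiction.

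Part (ii) is the structural heart, and I would split on $|A'|=\omega(\langle V_2\rangle)$. If $|A'|\leq 1$ then $X_3=\emptyset$. Otherwise, suppose a $P_3$, say $a\text{-}b\text{-}c$, lies in $X_3$; then every $v'_m\in A'$ is adjacent to one of $a,b,c$, for otherwise $\{a,b,c,v'_m\}$ is a $P_3\cup K_1$ in $V_2$, contradicting (i). When $|A'|\geq 4$, each of $a,b,c$ meets at most one vertex of $A'$, so together they cover at most $3<|A'|$ of them, leaving some $v'_m$ non-adjacent to all three --- contradiction. For $|A'|=2$, every vertex of $X_3=C'_{1,2}$ misses $v'_1$, so $v'_1$ with the $P_3$ is a forbidden $P_3\cup K_1$. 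For $|A'|=3$, each of $a,b,c$ misses at least two vertices of $A'$ while each $v'_m$ is adjacent to at least one of $a,b,c$; counting non-adjacent pairs forces a perfect matching between $\{a,b,c\}$ and $A'$, and after relabelling so that the matching is $a v'_1,\,b v'_2,\,c v'_3$, the set $\{a,c,v'_2,v'_3\}$ induces $P_3\cup K_1$ (path $v'_2\text{-}v'_3\text{-}c$, isolated $a$) in $V_2$ --- contradicting (i). Hence $\langle X_3\rangle$ is $P_3$-free, a disjoint union of cliques.

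The remaining parts reuse this machinery. For (iii), where $\omega(\langle V_2\rangle)\geq 4$, Lemma \ref{lemma}(iii) applied to $\langle V_2\rangle$ gives $\bigcup_r I'_r\subseteq I'_{r'}\cup I'_{s'}$; if both $I'_{r'},I'_{s'}$ were non-empty, then for $u_1\in I'_{r'}$, $u_2\in I'_{s'}$ and any $z\in X_3$ (available since $X_3\neq\emptyset$), the fact that $z$ meets at most one part $U'_\ell$ lets me pick an index $m\notin\{r',s'\}$ (possible as $|A'|\geq 4$) so that a four-set drawn from $\{z,u_1,v'_{r'},v'_{s'},v'_m\}$ induces $P_3\cup K_1$ in $V_2$, contradicting (i); so exactly one $I'_t$ survives. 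For (iv), note first that $\omega=\omega(\langle X_3\rangle)\leq\omega(\langle V_2\rangle)=|A'|\leq\omega$ forces $|A'|=\omega\geq 4$, so $A'$ and the size-$\omega$ clique $Q\subseteq X_3$ are two disjoint maximum cliques in $V_2$; were $\langle X_3\rangle$ to have a second clique-component, a vertex $w$ missing all of $Q$, a $K_4\subseteq Q$, and the hypothesised edge of $[X_3,X_2]$ would combine into a vertex adjacent to exactly one edge of a $K_4$, i.e.\ an induced $\overline{P_3\cup 2K_1}$. Finally, in (v), $\langle X_2\rangle$ is complete multipartite by Proposition \ref{k1k2kpprop}(i) applied to $\langle V_2\rangle$ (legitimate since $\omega(\langle V_2\rangle)\geq\omega(\langle X_2\rangle)=\omega\geq 4$), and the $0/1/(\omega-1)$ trichotomy for the number of parts of $X_j$ that a vertex of $X_i$ sees is read off from Lemma \ref{lemma}(i)--(ii) (``at most two parts'') sharpened by Proposition \ref{k1k2kpprop}(iii) together with the $I_k$-type description (the value $\omega-1$ being exactly the ``misses one part'' situation).

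I expect the main obstacle to be concentrated in the passage from (ii) to (v). Part (ii) itself is delicate only in the small-clique cases $|A'|\in\{2,3\}$, where the clean covering/counting argument of the $|A'|\geq 4$ case is unavailable and the forbidden $P_3\cup K_1$ must be assembled by hand from the forced matching structure; getting this uniformly right is the linchpin, since every later part rests on ``$\langle X_3\rangle$ is a union of cliques''. The genuinely hard step, however, is (v): pinning the number of seen parts to \emph{exactly} $0$, $1$, or $\omega-1$ requires a complete and careful bookkeeping of all cross-edges between the three blocks $X_1,X_2,X_3$ and the exclusion of every intermediate value, and it is there that the search for $P_3\cup 2K_1$ and $\overline{P_3\cup 2K_1}$ patterns is most intricate.
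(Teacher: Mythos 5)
Your parts (i)--(iii) are sound: (i) matches the paper's argument, and your (ii) replaces the paper's route (showing each $\langle C'_{i,j}\rangle$ is $P_3$-free and each $[C'_{i,j},C'_{k,l}]$ is complete) by a covering/counting argument over $A'$, which is a legitimate and arguably cleaner alternative; your (iii) is essentially the paper's proof (the preliminary appeal to Lemma \ref{lemma}(iii) inside $\langle V_2\rangle$ is not justified when $X_3$ contains no independent pair, but your direct argument makes it redundant). The real problems are in (iv) and (v).

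In (iv) your claimed configuration need not exist. You assert that a vertex $w$ in a second component, a $K_4\subseteq Q$ and the cross-edge of $[X_3,X_2]$ ``combine into a vertex adjacent to exactly one edge of a $K_4$,'' i.e.\ an induced $\overline{P_3\cup 2K_1}$. But if $b\in X_2$ is the $X_2$-endpoint of the cross-edge and $|N_Q(b)|\in\{1,\omega-1\}$, then no vertex of $Q\cup\{b\}$ has exactly two neighbours in a $K_4$ inside $Q$, and $w$ (non-adjacent to all of $Q$) cannot help form such a $K_4$; the case where the cross-edge leaves the \emph{second} component rather than $Q$ is not addressed at all. These are exactly the cases the paper disposes of differently: it first shows $[X_3\setminus H,X_2]=\emptyset$ by exhibiting a $P_3\cup K_1$ inside $\langle V_2\rangle$ (using a vertex $v'_p\in A'$ and part (i)), and then uses an edge $h_2b$ with $h_2\in H$ together with $h_3\in H\setminus N(b)$ and an arbitrary $a\in X_3\setminus H$ to get $\langle\{h_3,h_2,b,a\}\rangle\cong P_3\cup K_1$. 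In (v) you give no argument: excluding every intermediate value $2,\dots,\omega-2$ (and $\omega$) of ``parts seen'' is the content of the statement, and for $(i,j)=(1,3)$ with $[X_2,X_3]=\emptyset$ --- where $X_3$ is only a union of cliques --- the forbidden structure is a five-vertex $P_3\cup 2K_1$ assembled from $b\in X_3\setminus T$, $t_i\in T$, $v'_q\in X_2$ and a vertex $v_s\in A$, not an $\overline{P_3\cup 2K_1}$; pointing to Lemma \ref{lemma}(i)--(ii) (which constrains neighbourhoods of independent sets of $V_2$ \emph{in $V_1$}, the opposite direction) and calling the rest ``bookkeeping'' leaves the hardest part of the lemma unproved, as you yourself acknowledge.
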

\begin{proof} Let $G$ be a $\{P_3\cup 2K_1,\overline{P_3\cup 2K_1}\}$-free graph with $\omega(G)\geq 4$.
\begin{enumerate}[(i)]
\item For the sake of contradiction, let us assume that there exists a $(P_3\cup K_1)\sqsubseteq \langle V_2\rangle$.
By (\ref{p-1neighbors}) of Proposition \ref{k1k2kpprop} and (\ref{twou}) of Lemma \ref{lemma}, there exists a vertex $v_s\in V_1$ such that $\langle V(P_3\cup K_1)\cup v_s\rangle\cong P_3\cup 2K_1$, a contradiction.

\item Suppose $\langle X_3\rangle$ contains a $P_3$ say $P$, then $\omega(\langle V_2\rangle)\geq 2$.
If $\omega(\langle V_2\rangle)=2$, then $X_3=C_{1,2}'$ and $\langle\{V(P),v'_1\}\rangle\cong P_3\cup K_1$, a contradiction to (i) of Lemma \ref{lemma2}.
Hence, let us assume that $\omega(\langle V_2\rangle)\geq 3$. 
We see that $\langle C'_{i,j}\rangle$ is $P_3$-free, for every $(i,j)\in\{(1,2),(1,3),(2,3)\}$.
We shall now show that $[C_{i,j}',C_{k,l}']$ is complete for any distinct pair $(i,j),(k,l)\in\{(1,2),(1,3),(2,3)\}$. 
On the contrary, let us assume that there exists a distinct pair $(i,j)$ and $(k,l)$, say $(i,j)<_{L'}(k,l)$ and two vertices $a\in C_{i,j}'$ and $b\in C_{k,l}'$ such that $ab\notin E(G)$.
Since $(i,j),(k,l)\in\{(1,2),(1,3),(2,3)\}$, one can see that $\{i,j\}\cap\{k,l\}\neq \emptyset$ and as a result $i=k$ or $j=k$ or $j=l$. In any case, $\langle\{b,v_j',v_i',a\}\rangle\cong P_3\cup K_1$, a contradiction to (i) of Lemma \ref{lemma2}.
Hence $V(P)\subseteq (C_{i,j}'\cup C_{k,l}')$, for a distinct pair $(i,j),(k,l)\in\{(1,2),(1,3),(2,3)\}$ such that $(i,j)<_{L'}(k,l)$.
Here also either $i=k$ or $j=k$ or $j=l$,  and hence $P_3\cup K_1\sqsubseteq\langle\{V(P),v_i',v_j'\}\rangle$, a contradiction.

\item Let $\omega(\langle V_2\rangle)\geq 4$ and $X_3\neq \emptyset$, say $a\in X_3$. Suppose $\mathop\cup\limits_{r=1}^{|A'|}I_r'\nsubseteq I_t'$, then there exist two integers $t_1,t_2\in [|A'|]$ such that $I_{t_1}'\neq \emptyset$ and $I_{t_2}'\neq \emptyset$.
By using (\ref{p-1neighbors}) of Proposition \ref{k1k2kpprop}, we see that there exists a vertex $v_{s}'\in A'\backslash \{v_{t_1}',v_{t_2}'\}$ such that $av'_s\notin E(G)$ and $[a,U'_{k}]=\emptyset$ for at least one $k\in \{t_1,t_2\}$, say $t_1$.
Clearly by (\ref{completeIkIl}) of Proposition \ref{k1k2kpprop}, $P_3\cup K_1\sqsubseteq \langle\{U_{t_1}',v'_s, a\}\rangle$, a contradiction.

\item Let $\omega(\langle X_3\rangle)=\omega(G)$ and $[X_3, X_2]\neq \emptyset$. If $\langle X_3\rangle$ is not a clique,  then by (\ref{lem22})  of Lemma \ref{lemma2}, $\langle X_3\rangle$ will be a union of cliques. Let $H\subseteq X_3$ such that $\langle H\rangle$ is a clique of size $\omega$.
If $[\{X_3\backslash H\}, X_2]\neq \emptyset$ then there exists $a\in X_3\backslash H$ and $b\in X_2$ such that $ab\in E(G)$.
Since $\omega(\langle H\rangle)=\omega(G)$, there exists a vertex, say $h_1\in H$ such that $bh_1\notin E(G)$.
By (\ref{completeIkIl}) and (\ref{p-1neighbors}) of Proposition \ref{k1k2kpprop},  there exists a vertex $v_{p}'\in A'$ such that $\langle\{a,b,v_{p}',h_1\}\rangle\cong P_3\cup K_1$, a contradiction.
Hence $[\{X_3\backslash H\}, X_2]= \emptyset$.
Since $[X_3,X_2]\neq \emptyset$, $[H,X_2]\neq \emptyset$ and hence there exist $h_2 \in H$ and $b\in X_2$ such that $h_2b\in E(G)$. Since $H$ is a clique of size $\omega$, there exists a vertex $h_3\in H$ such that $bh_3\notin E(G)$. Therefore for any $a\in X_3\backslash H$,  $\langle\{h_3,h_2,b,a\}\rangle\cong P_3\cup K_1$, again a contradiction.

\item Let $\omega(\langle X_3\rangle)=\omega(\langle X_2\rangle)=\omega(G)$.
If $[X_3,X_2]\neq \emptyset$, then by (\ref{lem24}) of Lemma \ref{lemma2}, $\langle X_3\rangle$ is a clique and the proof is straightforward from the fact that $G$ is $(\overline{P_3\cup 2K_1}$)-free.
Hence let us assume that $[X_3,X_2]=\emptyset$.
By (\ref{lem22}) of Lemma \ref{lemma2}, $\langle X_3\rangle$ is a union of cliques and can be partitioned into $\omega$-independent sets. 
Let $T\subseteq V_2$ such that $\langle T\rangle$ is a clique of size $\omega$.
Since $[X_3,X_2]=\emptyset$, let  us assume that there exists a vertex $a\in X_1$ such that it is adjacent to more that $1$ and less that $\omega-1$ partitions in $X_j$. 
If $j=2$, then by (\ref{completeIkIl}) of Proposition \ref{k1k2kpprop}, $\overline{P_3\cup 2K_1}\sqsubseteq \langle X_2\cup a \rangle$, a contradiction.
Hence let us assume $j=3$. It is not difficult to see that, since $a$ is adjacent to more that $1$ and less that $\omega-1$ partitions in $X_3$, $[a,T]$ is not complete and it is adjacent to a vertex in $X_3\backslash T$. 
Let $b\in X_3\backslash T$, $t_i\in T$ and $v'_q\in X_2$ such that $ab\in E(G)$ and $at_i,bt_i,av'_q\notin E(G)$.
By (\ref{completeIkIl}) and (\ref{p-1neighbors}) of Proposition \ref{k1k2kpprop}, we can find a vertex $v_s\in A$ such  that $\langle\{b,a,v_s,t_i,v'_q\}\rangle\cong P_3\cup 2K_1$, a contradiction.
\end{enumerate}
\vskip-10mm
\end{proof}

A simple consequence of (i) of Proposition \ref{k1k2kpprop} and (ii) of Lemma \ref{lemma2} is Theorem \ref{char}.

\begin{theorem}\label{char}
If $G$ is a $\{P_3\cup 2K_1,\overline{P_3\cup 2K_1}\}$-free graph with $\omega(G)\geq 4$, then 
$V(G)=X_1\cup X_2\cup X_3$ where $\langle X_1\rangle$ is a complete multipartite graph, $\langle X_2\rangle$ is a multipartite graph (complete multipartite if $\omega(\langle X_2\rangle)\geq 4$) and $\langle X_3\rangle$ is a union of cliques.
\end{theorem}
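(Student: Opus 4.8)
The plan is to treat the three induced subgraphs $\langle X_1\rangle$, $\langle X_2\rangle$, $\langle X_3\rangle$ one at a time, recognizing each desired structural statement as a direct translation of a result already available. For $\langle X_1\rangle$, I would simply recall that $X_1=V_1$ by definition; since $G$ is $(\overline{P_3\cup 2K_1})$-free with $\omega(G)\geq 4$, part (\ref{completeIkIl}) of Proposition \ref{k1k2kpprop} says at once that $\langle V_1\rangle$ is complete multipartite with the $U_t$'s as its partition classes, so there is nothing further to check. For $\langle X_3\rangle$, I would use that $X_3=\bigcup_{(i,j)\in L'}C'_{i,j}$ and that part (\ref{lem22}) of Lemma \ref{lemma2} states exactly that $\langle X_3\rangle$ is $P_3$-free; since a graph is $P_3$-free precisely when it is a disjoint union of cliques, this yields that $\langle X_3\rangle$ is a union of cliques.

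The substance lies in $\langle X_2\rangle$, where $X_2=A'\cup\left(\mathop\cup\limits_{k=1}^{|A'|}I'_k\right)$ is by construction the ``$V_1$-part'' of the induced subgraph $\langle V_2\rangle$, taken relative to the maximum clique $A'$ and the sets $U'_k=\{v'_k\}\cup I'_k$. First I would verify that each $U'_k$ is independent: every vertex of $I'_k$ is a non-neighbor of $v'_k$, and $I'_k$ is itself independent by property (i) of the partition of $V_2$, so $U'_k$ has no edges. Hence $\langle X_2\rangle$ is always partitioned into the independent sets $U'_k$, i.e. it is a multipartite graph. For the stronger conclusion I would note that $A'\subseteq X_2\subseteq V_2$ with $A'$ a maximum clique of $\langle V_2\rangle$, so $\omega(\langle X_2\rangle)=|A'|=\omega(\langle V_2\rangle)$; thus the hypothesis $\omega(\langle X_2\rangle)\geq 4$ is identical to $\omega(\langle V_2\rangle)\geq 4$. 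Since $\langle V_2\rangle$ is an induced subgraph of $G$ it is again $(\overline{P_3\cup 2K_1})$-free, so I may apply part (\ref{completeIkIl}) of Proposition \ref{k1k2kpprop} to $\langle V_2\rangle$ itself; its conclusion, read through the partition of $V_2$, gives that $[I'_k,I'_\ell]$ is complete for all $k,\ell$ and hence that $\langle X_2\rangle$ is complete multipartite whenever $\omega(\langle X_2\rangle)\geq 4$.

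The only mildly delicate point, and the one I expect to be the main obstacle, is the bookkeeping in the $X_2$ case: one must apply Proposition \ref{k1k2kpprop} not to $G$ but to the induced subgraph $\langle V_2\rangle$, and must confirm that it is the clique number $\omega(\langle V_2\rangle)$, rather than $\omega(G)$, that governs completeness. This is exactly why the statement phrases the extra hypothesis as $\omega(\langle X_2\rangle)\geq 4$ instead of $\omega(G)\geq 4$, and why the baseline ``multipartite'' conclusion must be derived separately from the independence of the $U'_k$'s rather than from the proposition. Once this is in place, everything else is a direct citation of Proposition \ref{k1k2kpprop}(\ref{completeIkIl}) and Lemma \ref{lemma2}(\ref{lem22}).
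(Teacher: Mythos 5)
Your proposal is correct and follows essentially the same route as the paper, which simply declares Theorem \ref{char} to be ``a simple consequence of (i) of Proposition \ref{k1k2kpprop} and (ii) of Lemma \ref{lemma2}'' and (in the remark just after it) applies Proposition \ref{k1k2kpprop}(\ref{completeIkIl}) to $\langle V_2\rangle$ exactly as you do for the $X_2$ case. Your write-up is in fact more careful than the paper's, since you make explicit the check that $\omega(\langle X_2\rangle)=\omega(\langle V_2\rangle)$ and that the unconditional ``multipartite'' claim comes from the independence of the $U'_k$'s.
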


Let us recall that, by using (\ref{completeIkIl}) of Proposition \ref{k1k2kpprop}, $\langle X_1\rangle$ and $\langle X_2\rangle$ (if $\omega(\langle V_2\rangle)\geq 4$) are complete multipartite graphs and by using (\ref{p-1neighbors}) of Proposition \ref{k1k2kpprop}, each vertex in $X_3$ will have neighbors in at most one $U_i'$ in $X_2$ if $\omega(\langle V_2\rangle)\geq 4$ and at most one $U_j$ in $X_1$. For the same reason, each vertex in $X_2$ will have neighbors in at most one $U_k$ in $X_1$. To avoid repetition, we shall be using these facts without mentioning the reason.

\begin{lemma}\label{lemma3}
Let $G$ be a $\{P_3\cup 2K_1,\overline{P_3\cup 2K_1}\}$-free graph with $\omega(G)\geq 4$,  $\omega(\langle X_2\rangle)=\omega(\langle X_3\rangle)=\omega(G)$, $[X_2,X_3]\neq\emptyset$ and for every $a\in X_3$, $[a,X_2]\neq \emptyset\neq[a,X_1]$. Here $\mathop\cup\limits_{r=1}^{|A'|}I_r'\subseteq I_t'$, for some $t\in [|A'|]$ and if $I_t'\neq\emptyset$,  then
\begin{enumerate}[(i)]
\item\label{lem31} $|U'_{t}|=2$, say $U'_{t}=\{z_1,z_2\}$ such that $|N_{X_3}(z_1)|=\omega-1$, $|N_{X_3}(z_2)|=1$, $(N_{X_3}(z_1)\cup N_{X_3}(z_2))=X_3$ and $(N_{X_3}(z_1)\cap N_{X_3}(z_2))=\emptyset$. As a result, $[X_3,X_2\backslash U'_{t}]=\emptyset$.
\item\label{lem32} $\mathop\cup\limits_{k=1}^{\omega} I_k\subseteq I_p$, for some $p\in[\omega]$. Furthermore, if $X\subseteq U_p$ and $|X|\geq 2$, then $[X,U'_{t}]\neq \emptyset$. 
\end{enumerate}

\noindent Further, if $V_2\subseteq N(U_p)$, then the following holds.

\begin{enumerate}[(i)]
 \setcounter{enumi}{2}
\item\label{lem33}If there exists a vertex $y\in U_p$ such that $[y,U'_{t}]$ is complete, then $N_{X_3}(y)\subseteq N_{X_3}(z_2)$.
Furthermore, if $[y, X_2\backslash\{a\}]$ is complete, where $a\in X_2\backslash U'_{t}$, then $N_{X_3}(y)=\emptyset$.
\item\label{lem34}There exists a unique vertex $y_1\in U_p$ such that $[y_1, X_2\backslash U'_{t}]$ is complete and $N_{X_2}(U_p\backslash \{y_1\})\subseteq U'_{t}$ such that every vertex in $U_p\backslash \{y_1\}$ has at least one neighbor in $U'_{t}$. In addition, $N_{X_3}(y_1)\neq\emptyset$.
\item\label{lem36}$|U_p|\leq 3$. If $|U_p|=3$ and $U_p=\{y_1,y_2,y_3\}$ then $N_{X_3}(y_2)=N_{X_3}(z_1)$, $N_{X_3}(y_1)=N_{X_3}(y_3)=N_{X_3}(z_2)$, $z_1y_3,z_2y_2\in E(G)$ and $z_2y_2\notin E(G)$.
\end{enumerate}
\end{lemma}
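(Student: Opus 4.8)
The plan is to first strip the hypotheses down to a coarse structure, then isolate a single reusable ``gadget'' that controls how any vertex outside $X_3$ attaches to the clique $X_3$, and finally to push this gadget through the five parts in the stated order. \textbf{Preliminary reductions and the gadget.} Since $\omega(\langle X_3\rangle)=\omega(G)$ and $[X_2,X_3]\neq\emptyset$, part (\ref{lem24}) of Lemma \ref{lemma2} makes $\langle X_3\rangle$ a clique on exactly $\omega$ vertices; as $X_3\subseteq V_2$ this forces $\omega(\langle V_2\rangle)=\omega$, hence $|A'|=\omega\geq 4$, so part (\ref{lem23}) of Lemma \ref{lemma2} gives the promised unique $t$ with $\bigcup_r I'_r\subseteq I'_t$, and part (\ref{lem25}) of Lemma \ref{lemma2} makes $\langle X_2\rangle$ complete multipartite whose parts are the singletons $\{v'_k\}$ ($k\neq t$) together with the independent set $U'_t=\{v'_t\}\cup I'_t$, the latter of size at least $2$ because $I'_t\neq\emptyset$. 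The engine of everything is the remark that for every $u\in V(G)\setminus X_3$ one has $|N_{X_3}(u)|\in\{0,1,\omega-1\}$: the value $\omega$ would build a $K_{\omega+1}$, and if $u$ had two neighbours $y_1,y_2$ and two non-neighbours $f_1,f_2$ in the clique $X_3$, then $\langle\{u,y_1,y_2,f_1,f_2\}\rangle\cong\overline{P_3\cup 2K_1}$ (the four $X_3$-vertices form a $K_4$ and $u$ is joined to exactly $y_1,y_2$), contradicting that $G$ is $(\overline{P_3\cup 2K_1})$-free. I will feed this dichotomy to the vertices of $U'_t$, to the $v'_\ell$, and later to the vertices of $U_p$.

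\textbf{Part (i).} Applying the gadget to each $z\in U'_t$ leaves only $|N_{X_3}(z)|\in\{0,1,\omega-1\}$. Since $U'_t$ is independent, $|U'_t|\geq 2$, and $\langle V_2\rangle$ is $(P_3\cup K_1)$-free by part (\ref{lem21}) of Lemma \ref{lemma2}, a short case check on these values over the members of $U'_t$ (no two members may both omit a large portion of the clique without yielding a $P_3\cup K_1$ inside $V_2$, and an uncovered $a_0\in X_3$ would be forced onto a singleton part $\{v'_\ell\}$ and produce a forbidden subgraph) pins down $|U'_t|=2$, say $U'_t=\{z_1,z_2\}$, with $|N_{X_3}(z_1)|=\omega-1$, $|N_{X_3}(z_2)|=1$, and $N_{X_3}(z_1),N_{X_3}(z_2)$ partitioning $X_3$. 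Once the two neighbourhoods cover $X_3$, every $a\in X_3$ is adjacent to $z_1$ or $z_2$, so its unique $X_2$-part is $U'_t$; hence $[X_3,X_2\setminus U'_t]=\emptyset$, the final assertion of (i).

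\textbf{Parts (ii)--(v).} For (ii), $U'_t$ is an independent subset of $V_2$ with $|U'_t|\geq 2$, so parts (\ref{twou}) and (\ref{twoI}) of Lemma \ref{lemma} already confine the nonempty $I_k$ to at most two indices; I would then rule out two distinct nonempty $I_r,I_s$ by combining the $z_1,z_2$ structure of (i) with the gadget and (\ref{completeIkIl})/(\ref{p-1neighbors}) of Proposition \ref{k1k2kpprop} to exhibit a $P_3\cup 2K_1$, leaving the single $I_p$, and the ``furthermore'' clause is a direct forbidden-subgraph check on $X\subseteq U_p$ against $U'_t$. Parts (iii)--(v) add $V_2\subseteq N(U_p)$ and are proved by the \emph{same} gadget applied now to the independent set $U_p\subseteq X_1$: each $y\in U_p$ has $|N_{X_3}(y)|\in\{0,1,\omega-1\}$, and forcing $y$'s behaviour on $X_2$ (completeness of $[y,U'_t]$, resp.\ of $[y,X_2\setminus\{a\}]$) to be compatible with the fixed pair $\{z_1,z_2\}$ and the partition of $X_3$ gives $N_{X_3}(y)\subseteq N_{X_3}(z_2)$, then isolates the unique $y_1$ complete to $X_2\setminus U'_t$, and finally bounds $|U_p|\leq 3$ together with the stated adjacencies by an extremal count of how many members of $U_p$ can simultaneously respect the $\{0,1,\omega-1\}$ dichotomy and the split of $X_3$ into $N_{X_3}(z_1)$ and $N_{X_3}(z_2)$.

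\textbf{The main obstacle.} The difficulty lies not in the gadget but in the bookkeeping of two independent sets, $U'_t\subseteq X_2$ and $U_p\subseteq X_1$, attaching to the \emph{same} $\omega$-clique $X_3$ relative to the fixed partition $X_3=N_{X_3}(z_1)\cup N_{X_3}(z_2)$; the case analysis is tightest at $\omega=4$, where $\omega-1$ and $2$ nearly coincide and the counting inequalities leave almost no slack. At each step one must choose the non-neighbour(s) inside $X_3$, or inside a non-trivial part of $X_1$, that actually complete a forbidden $P_3\cup 2K_1$ or $\overline{P_3\cup 2K_1}$ rather than a harmless $C_4$ or $K_4$, and I expect the covering claim in (i) and the uniqueness of $y_1$ in (iv) to require the most delicate such choices.
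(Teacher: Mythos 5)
Your overall strategy coincides with the paper's: the ``gadget'' you isolate ($|N_{X_3}(u)|\in\{0,1,\omega-1\}$ for $u\notin X_3$, via a $K_{\omega+1}$ or an induced $(K_1\cup K_2)+K_2\cong\overline{P_3\cup 2K_1}$) is exactly (\ref{lem25}) of Lemma \ref{lemma2} specialized to the clique $X_3$, and the preliminary reductions (clique structure of $X_3$ from (\ref{lem24}), unique $t$ from (\ref{lem23}), complete multipartite $X_2$ with $U'_t$ the only non-singleton part) are correct and match the paper. The sketch of part (\ref{lem31}) is also sound in outline: the covering of $X_3$ by $N_{X_2}(X_3)\subseteq U'_t$ via the $(P_3\cup K_1)$-freeness of $\langle V_2\rangle$, the forced existence and uniqueness of the $(\omega-1)$-neighbour $z_1$, and the deduction $[X_3,X_2\setminus U'_t]=\emptyset$ all go through essentially as you describe.

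However, for parts (\ref{lem32})--(\ref{lem36}) the proposal stops at a plan and the deferred content contains the genuinely hard steps, so I cannot count it as a proof. Concretely: (a) in (\ref{lem32}), eliminating a second nonempty $I_q$ is not ``a direct forbidden-subgraph check''; the paper has to first deduce $N_{X_1}(h_\omega)\subseteq U_q$ and $N_{X_1}(h_r)\subseteq U_p$ ($r<\omega$) from the split $N_{X_3}(z_1)\cup N_{X_3}(z_2)=X_3$, and then show every $y\in X_2\setminus U'_t$ would need neighbours in both $U_p$ and $U_q$, contradicting (\ref{p-1neighbors}) of Proposition \ref{k1k2kpprop}. (b) In (\ref{lem34}) you do not address the claim $N_{X_3}(y_1)\neq\emptyset$ at all; in the paper this is the longest sub-argument of the lemma (it requires assuming $N_{X_3}(y_1)=\emptyset$, forcing $|U_p|\geq 3$, showing $(X_3\cup U'_t)\subseteq N(y_2)\cup N(y_3)$, and then a two-case analysis on whether $z_1y_2$ or $z_2y_2$ is the edge to reach a $P_3\cup 2K_1$). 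Your gadget alone does not produce this. (c) In (\ref{lem36}) the statement is not just $|U_p|\leq 3$ but a complete adjacency table ($N_{X_3}(y_2)=N_{X_3}(z_1)$, $N_{X_3}(y_1)=N_{X_3}(y_3)=N_{X_3}(z_2)$, plus the specific $U'_t$-edges); ``an extremal count'' does not yield these equalities, and the paper needs three separate cases on $|N_{X_3}(y_1)|$ and on whether $N_{X_3}(y_2)=N_{X_3}(z_1)$, two of which are shown to be impossible. Until these arguments are written out, the proof of the lemma is incomplete even though the chosen route is the right one.
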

\begin{proof}  Let $G$ be a $\{P_3\cup 2K_1,\overline{P_3\cup 2K_1}\}$-free graph with $\omega(G)\geq 4$,  $\omega(\langle X_2\rangle)=\omega(\langle X_3\rangle)=\omega(G)$, $[X_2,X_3]\neq\emptyset$ and for every $a\in X_3$, $[a,X_2]\neq \emptyset\neq[a,X_1]$. By (iii) and (iv) of Lemma \ref{lemma2}, $\mathop\cup\limits_{r=1}^{|A'|}I_r'\subseteq I_t'$, for some $t\in [|A'|]$ and $X_3$ is a clique, say $X_3=\{h_1,h_2,\ldots,h_\omega\}$.  In addition, let us assume that $I_t'\neq\emptyset$.
\begin{enumerate}[(i)]
\item We shall first observe that $N_{X_2}(X_3)\subseteq U'_{t}$.
Suppose there exists a vertex $a\in X_3$ with $[a,U'_{t}]=\emptyset$, then we can find a vertex $v'_{r}\in A'$ such that $P_3\cup K_1\sqsubseteq \langle \{U'_{t},v'_{r},a\}\rangle$, a contradiction.
Next, we can also observe that there exists a unique vertex $z_1\in U'_{t}$ such that $|[z_1,X_3]|=\omega-1$.
On the contrary, if $|[y,X_3]|\leq\omega-2$ for every vertex  $y\in U'_{t}$, then by (v) of Lemma \ref{lemma2}, $|[y,X_3]|\leq 1$, for every $y\in U'_{t}$. 
Since every vertex in $X_3$ has a neighbor in $X_2$ and more precisely in $U'_{t}$, there exist vertices $h_1,h_2,h_3\in X_3$ and $y_1,y_2\in U'_{t}$ such that $y_1h_1,y_2h_2\in E(G)$ and $[h_3,\{y_1,y_2\}]=\emptyset$.
Here also, we can find a vertex $v'_{r}\in A'\backslash \{v'_t\}$ such that $\langle\{y_1,v'_{r},y_2,h_3\}\rangle\cong P_3\cup K_1$, a contradiction.
Hence there exists a vertex $z_1\in U'_{t}$ such that $|[z_1,X_3]|=\omega-1$. Let $N_{X_3}(z_1)=\{h_1,h_2,\ldots,h_{\omega-1}\}$. 
For the uniqueness, suppose there exists another vertex $y_0\in U'_{t}\backslash \{z_1\}$ such that $|[y_0,X_3]|=\omega-1$, then we can see that when $N_{X_3}(z_1)\neq N_{X_3}(y_{0})$, $\overline{P_3\cup 2K_1}\sqsubseteq \langle\{y_0,N_{X_3}(z_1),z_1\}\rangle$  and when  $N_{X_3}(z_1)=N_{X_3}(y_0)$,  then we can find a vertex $v'_{r}\in A'\backslash \{v'_t\}$ such that $\langle\{z_1,v'_{r},y_0,h_\omega\}\rangle\cong P_3\cup K_1$, a contradiction. 
Hence $z_1$ is the unique vertex in $U'_{t}$ such that $|[z_1,X_3]|=\omega-1$. 
Similarly, one can also see that $N_{X_3}(U'_{t}\backslash \{z_1\})\cap N_{X_3}(z_1)=\emptyset$, which implies that $N_{X_3}(U'_{t}\backslash \{z_1\})\subseteq \{h_\omega\}$.
As a consequence, we can see that $|U'_{t}|=2$, say $U'_{t}=\{z_1,z_2\}$ and $z_2h_\omega\in E(G)$.  
Hence $N_{X_2}(X_3)=U'_{t}$ and $[X_3,X_2\backslash U'_{t}]=\emptyset$.
\item By (\ref{twoI}) of Lemma \ref{lemma}, we know that $\mathop\cup\limits_{k=1}^{\omega} I_k\subseteq I_p\cup I_q$, for some $p,q\in[\omega]$. Suppose, $I_p\neq \emptyset$ and $I_q\neq\emptyset$, it is not difficult to see that $[U'_{t},U_p]\neq\emptyset$ and $[U'_{t},U_q]\neq\emptyset$.
Without loss of generality, let us assume that $z_1\in N(U_p)$ and $z_2\in N(U_q)$.
Since $[z_1,h_{\omega}]=\emptyset$ and $[z_2,\mathop\cup\limits_{r=1}^{\omega-1} h_r]=\emptyset$, we see that $N_{X_1}(h_{\omega})\subseteq U_q$ and $N_{X_1}(\mathop\cup\limits_{r=1}^{\omega-1} h_r)\subseteq U_p$ respectively.
Hence every vertex $y\in X_2\backslash U'_{t}$ has neighbors in both $U_p$ and $U_q$ (otherwise, we can find a vertex in $v_r\in V_1\backslash \{U_p,U_q\}$, such that $P_3\cup 2K_1\sqsubseteq \{U_p\cup U_q,v_r,y,X_3\}$), a contradiction.
Furthermore, if $X\subseteq U_p$ and $|X|\geq 2$, then it is not difficult to see that $[X,U'_{t}]\neq \emptyset$.
\item For the sake of contradiction, let us assume that there exists a vertex $y\in U_p$ such that $[y,U'_{t}]$ is complete but $N_{X_3}(y)\nsubseteq N_{X_3}(z_2)$.
Let us first consider the case when  $|N_{X_3}(y)|=\omega-1$.
If $N_{X_3}(y)=N_{X_3}(z_1)$, then $\langle\{h_{\omega},h_1,h_2,y,z_1\}\rangle\cong \overline{P_3\cup 2K_1}$, a contradiction and if $N_{X_3}(y)\neq N_{X_3}(z_1)$, then $h_{\omega},h_i,h_j\in N_{X_3}(y)$, for some $i,j\in [\omega-1]$ and thereby  $\langle\{z_2,y,h_{\omega},h_i,h_j\}\rangle\cong \overline{P_3\cup 2K_1}$, again a contradiction.
Hence $|N_{X_3}(y)|\leq 1$.
Clearly, if $N_{X_3}(y)\cap N_{X_3}(z_1)\neq \emptyset$, say $h_1\in N_{X_3}(y)\cap N_{X_3}(z_1)$, then $\langle\{y,z_1,h_1,h_2,h_3\}\rangle\cong \overline{P_3\cup 2K_1}$, a contradiction.
Therefore $N_{X_3}(y)\subseteq N_{X_3}(z_2)$.
Furthermore, if $[y, X_2\backslash\{a\}]$ is complete, then $N_{X_3}(y)\neq\emptyset$ will yield $\overline{P_3\cup 2K_1}\sqsubseteq \langle\{h_{\omega},y,z_2,X_2\backslash U'_{t}\}\rangle$, a contradiction.

\item We begin by showing that there exists a vertex $y\in U_p$ such that $|[y, X_2]|\geq \omega-1$.
On the contrary, let us assume that $|[y, X_2]|\leq \omega-2$, for every $y\in U_p$.
Then by (\ref{lem25}) of Lemma \ref{lemma2}, every $y\in U_p$, $y$ has neighbors in at most one partition in $X_2$.
In addition, since $V_2\subset N(U_p)$, we can find vertices $x_1,x_2\in U_p$ such that $[\{x_1,x_2\},U'_t\}]=\emptyset$, a contradiction to (ii) of Lemma \ref{lemma3}.
Hence there exists a vertex say $y_1\in U_p$ such that $|[y_1, X_2]|\geq \omega-1$.
If $|[y_1, X_2]|\geq \omega-1$ and $[y_1, U'_{t}]$ is neither $\emptyset$ nor complete, then one can observe that $\overline{P_3\cup 2K_1}\sqsubseteq \langle\{y_1\cup X_2\}\rangle$, a contradiction.
Thus if $|[y_1,U_p]|\geq \omega-1$, then either $[y_1, X_2\backslash U'_{t}]$ is complete or $[y_1, X_2\backslash\{a\}]$ is complete, for some $a\in X_2\backslash U'_{t}$. Let us next show that such a vertex $y_1$ is unique. 
Suppose $y_1$ is not unique and there exists another vertex $y_0\in U_{p}\backslash\{y_1\}$ such that $|[y_0, X_2]|\geq \omega-1$.
If $N_{X_2}(y_1)\neq N_{X_2}(y_0)$, then $\overline{P_3\cup 2K_1}\sqsubseteq\langle\{y_1,N_{X_2}(y_0),y_0\}\rangle$, a contradiction.
Hence $N_{X_2}(y_1)=N_{X_2}(y_0)$.
If $N_{X_2}(y)=N_{X_2}(y_0)=X_2\backslash U'_{t}$, then $[\{y_1,y_0\},U'_{t}]=\emptyset$, a contradiction to (ii) of Lemma \ref{lemma3} and if $N_{X_2}(y_1)=N_{X_2}(y_0)=X_2\backslash \{a\}$, for some $a\in X_2\backslash U'_{t}$, then by (i) and (iii) of Lemma \ref{lemma3}, we can find a vertex $v_s\in A\backslash\{v_p\}$ such that $\langle\{y_1,v_s,y_0,h_1,a\}\rangle\cong P_3\cup 2K_1$, again a contradiction.
Thus $y_1$ is the only vertex in $U_p$ such that $|[y_1,X_2]|\geq \omega-1$ and as a result either $[y_1, X_2\backslash U'_{t}]$ is complete or $[y_1, X_2\backslash\{a\}]$ is complete, where $a\in X_2\backslash U'_{t}$.
We shall now show that $[y_1, X_2\backslash U'_{t}]$ being complete is the only possibility. 
Suppose $[y_1, X_2\backslash\{a\}]$ is complete, then by (iii) of Lemma \ref{lemma3}, $N_{X_3}(y_1)=\emptyset$.
Since $X_3\subseteq N(U_p)$, $|U_p|\geq 3$.
Let $y_2,y_3\in U_{p}\backslash \{y_1\}$.
We shall show that $y_2a,y_3a\in E(G)$.
On the contrary, let us assume that $y_2a\notin E(G)$, then there exist vertices $v_s\in V_1$ and $h_{\ell}\in X_3$ (since $N_{X_3}(y_1)=\emptyset$ and $|N_{X_3}(y_2)|\leq \omega-1$), for some $\ell,s\in [\omega]$ such that $\langle\{y_1,v_s,y_2,h_{\ell},a\}\rangle\cong P_3\cup 2K_1$, a contradiction.
Similarly, one can show that $y_3a\in E(G)$ and hence $[\{y_2,y_3\},U'_t]=\emptyset$, again a contradiction to (ii) of Lemma \ref{lemma3}.
Hence $[y_1,X_2\backslash U'_{t}]$ being complete is the only possibility.  For similar reasons $N_{X_2}(U_p\backslash \{y_1\})\subseteq U'_{t}$ and every vertex in $U_p\backslash \{y_1\}$ has at least one neighbor in $U'_{t}$.
Finally, let us show that $N_{X_3}(y_1)\neq \emptyset$.
For the sake of contradiction, let us assume that $N_{X_3}(y_1)=\emptyset$. 
Since $X_3\subseteq N(U_p)$, $|U_p|\geq 3$.
Let us consider $y_2,y_3\in U_{p}\backslash \{y_1\}$ such that $U'_{t}\subseteq N(y_2)\cup N(y_3)$ (since $X_2\subseteq N(U_p)$).
If $X_3\nsubseteq \{N(y_2)\cup N(y_3)\}$, then by (\ref{lem31}) of Lemma \ref{lemma3} and the fact that $N_{X_2}(U_p\backslash \{y_1\})\subseteq U'_t$, we can find vertices $v_r'\in X_2\backslash U'_{t}$, $v_s\in V_1$ and $h_i\in X_3\backslash \{N(y_2)\cup N(y_3)\}$  such that $\langle\{y_2,v_s,y_3,v'_r,h_i\}\rangle\cong P_3\cup 2K_1$, a contradiction.
Hence $(X_3\cup U'_{t})\subseteq \{N(y_2)\cup N(y_3)\}$.
Clearly, by (\ref{lem25}) of Lemma \ref{lemma2}, one of the vertices in $\{y_2,y_3\}$ has $\omega-1$ neighbors in $X_3$, say $y_2$ and by (\ref{lem33}) of Lemma \ref{lemma3}, $[y_2,U'_{t}]$ is not complete.
If $z_1y_2\in E(G)$ and $z_2y_2\notin E(G)$, then it is not difficult to see that $N_{X_3}(z_1)\neq N_{X_3}(y_2)$ (if $N_{X_3}(z_1)=N_{X_3}(y_2)$, then $\omega(\langle\{z_1,y_2,\mathop\cup\limits_{r=1}^{\omega-1}h_r\}\rangle)=\omega+1$, a contradiction).
Hence let us assume that $N_{X_3}(y_2)=\mathop\cup\limits_{r=2}^{\omega}h_r$.
Since $(X_3\cup U'_{t})\subseteq \{N(y_2)\cup N(y_3)\}$, $y_3z_2,y_3h_1\in E(G)$.
As a result, by (\ref{lem33}) of Lemma \ref{lemma3}, $y_3z_1\notin E(G)$ and thus $\langle\{z_2,y_3,h_1,y_1,y_2\}\rangle\cong P_3\cup 2K_1$, a contradiction.
Now, let us consider the case when $z_2y_2\in E(G)$ and $z_1y_2\notin E(G)$.
Since $|N_{X_3}(y_2)|=\omega-1$, one can observe that $N_{X_3}(y_2)=N_{X_3}(z_1)$ (else, we get an induced $\overline{P_3\cup 2K_1}$).
Hence we can find a vertex $v_s\in X_1$ such that $\langle\{y_1,v_s,y_2,z_1,h_{\omega}\}\rangle\cong P_3\cup 2K_1$, a contradiction.
Therefore, $N_{X_3}(y_1)\neq \emptyset$.
\item Let us start by showing that $|U_p|\leq 3$.
By (\ref{lem34}) of Lemma \ref{lemma3}, there exists a $y_1\in U_p$ such that $X_2\backslash U'_{t}\subset N(y_1)$ and $[X_2\backslash U'_t, U_{p}\backslash\{y_1\}]=\emptyset$.
Suppose $|U_p|\geq 4$, then there exists at least three vertices, say, $y_2,y_3,y_4\in U_p\backslash\{y_1\}$.
By similar arguments as in (\ref{lem34}) of Lemma \ref{lemma3}, one can observe that $X_3$ is dominated by any two distinct vertices in $\{y_2,y_3,y_4\}$ and the number of neigbors of any. Let us first consider the case when $X_3$ is dominated by  $y_2$ and $y_3$.  
As $X_3\subset \{N(y_2)\cup N(y_3)\}$, one can see that $N_{X_3}(y_2)\neq N_{X_3}(y_3)$ and if $|N_{X_3}(y_2)|=|N_{X_3}(y_3)|=\omega-1$ then  we get an induced $\overline{P_3\cup 2K_1}$, a contradiction. Hence  without loss of generality, let us assume that $|N_{X_3}(y_2)|=\omega-1$, $|N_{X_3}(y_3)|=1$ and $N_{X_3}(y_2)\cap N_{X_3}(y_3)=\emptyset$.
Next, let us consider $y_2,y_4\in U_{p}\backslash\{y_1\}$.
Since $|N_{X_3}(y_2)|=\omega-1$, again by using similar arguments, one can see that $|N_{X_3}(y_4)|=1$ such that  $N_{X_3}(y_2)\cap N_{X_3}(y_3)=\emptyset$ and as a consequence, we get that $X_3\nsubseteq N(\{y_3,y_4\})$, a contradiction.
Thus $|U_p|\leq 3$.
Let us now consider the case when $|U_p|=3$, say,
$U_p=\{y_1,y_2,y_3\}$, where $y_1$ is as defined in (\ref{lem34}) of Lemma \ref{lemma3}.
For the same reason as mentioned earlier, without loss of generality, let us assume that $|N_{X_3}(y_2)|=\omega-1$, $|N_{X_3}(y_3)|=1$ and $N_{X_3}(y_2)\cap N_{X_3}(y_3)=\emptyset$.
Also by (\ref{lem34}) of Lemma \ref{lemma3}, we know that $N_{X_3}(y_1)\neq \emptyset$.
Hence by (\ref{lem25}) of Lemma \ref{lemma2}, $|N_{X_3}(y_1)|=\omega-1 \textnormal{ or }1$.
Let us divide the proof depending on $|N_{X_3}(y_1)|$. 

\textbf{Case 1} $|N_{X_3}(y_1)|=\omega-1$

One can see that since $z_1y_1\notin E(G)$, $N_{X_3}(y_1)=N_{X_3}(z_1)=\mathop\cup\limits_{r=1}^{\omega-1}h_r$  (otherwise $\overline{P_3\cup 2K_1}\sqsubseteq \langle\{y_1,X_3,z_1\}\rangle$, a contradiction).
Similarly, since $|N_{X_3}(y_2)|=\omega-1$ and $y_1y_2\notin E(G)$, we get that $N_{X_3}(z_1)=N_{X_3}(y_1)=N_{X_3}(y_2)$ $\implies$ $y_2z_1\notin E(G)$ (else, $\omega(\langle\{z_1,y_2,\mathop\cup\limits_{r=1}^{\omega-1}h_r\}\rangle)=\omega+1$) and as a consequence we see that  $y_2z_2,y_3z_1,y_3h_{\omega}\in E(G)$ (by (\ref{lem34}) of Lemma\ref{lemma3} and the fact that $(X_3\cup U'_{t})\subset \{N(y_2)\cup N(y_3)$\}).
Thus we can find a $v_s\in A\backslash\{v_p\}$ such that $\langle\{y_1,v_s,y_2,z_1,h_{\omega}\}\rangle\cong P_3\cup 2K_1$, a contradiction and hence case 1 is not possible.

\textbf{Case 2} $|N_{X_3}(y_1)|=1$ and $N_{X_3}(y_2)\neq N_{X_3}(z_1)$.

Since $N_{X_3}(y_2)\neq N_{X_3}(z_1)$, $z_1y_2\in E(G)$ (otherwise we will get an induced $\overline{P_3\cup 2K_1}$).  Without loss of generality, let us assume that $y_2h_1\notin E(G)$.
 Since $X_3\subset N(U_p)$, one can see that $y_3h_1\in E(G)$. By using (\ref{lem33}) and (\ref{lem34}) of Lemma \ref{lemma3}, $z_2y_3\in E(G)$ and $z_2y_2, z_1y_3\notin E(G)$.
Here if $y_1h_1\notin E(G)$, then $\langle\{h_1,y_3,z_2,y_1,y_2\}\rangle\cong P_3\cup 2K_1$ and if $y_1h_1\in E(G)$, then we can find a vertex $v_s\in A\backslash \{v_p\}$ such that $\langle\{y_1,v_s,y_3,z_1,h_{\omega}\}\rangle\cong P_3\cup 2K_1$, in any case a contradiction and hence case 2 is also not possible.

\textbf{Case 3} $|N_{X_3}(y_1)|=1$ and $N_{X_3}(y_2)= N_{X_3}(z_1)$.

Here $N_{X_3}(y_2)=N_{X_3}(z_1)=\mathop\cup\limits_{r=1}^{\omega-1}h_r$ and $N_{X_3}(y_3)=N_{X_3}(z_2)=\{h_{\omega}\}$.
By using similar arguments as in Case 1 and Case 2, one can observe that $y_2z_1,y_2h_{\omega}\notin E(G)$ and as a consequence $y_2z_2,y_3z_1,y_3h_{\omega}\in E(G)$ which in turn implies that $y_1h_{\omega}\in E(G)$. 
\end{enumerate}
\vskip-10mm
\end{proof}
Let us now determine an optimal $\chi$-binding function for $\{P_3\cup 2K_1,\overline{P_3\cup 2K_1}\}$-free graph with $\omega(G)\geq 4$.  
\begin{theorem}\label{main}
If $G$ is a $\{P_3\cup 2K_1,\overline{P_3\cup 2K_1}\}$-free graph with $\omega(G)\geq 4$, then $\chi(G)\leq \omega(G)+1$.
\end{theorem}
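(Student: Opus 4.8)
The plan is to use the structural decomposition $V(G)=X_1\cup X_2\cup X_3$ supplied by Theorem~\ref{char} together with the refined adjacency information extracted in Lemmas~\ref{lemma2} and~\ref{lemma3}, and to build a proper $(\omega+1)$-coloring by colouring the three pieces almost independently. The overall strategy is a case analysis on the clique numbers $\omega(\langle X_2\rangle)$ and $\omega(\langle X_3\rangle)$ relative to $\omega(G)$, since these determine how constrained the interaction between the layers is. When one of these pieces fails to achieve the full clique number $\omega$, there is slack that should let a greedy or patched colouring succeed; the genuinely tight case is when $\omega(\langle X_2\rangle)=\omega(\langle X_3\rangle)=\omega$, which is exactly the hypothesis under which Lemma~\ref{lemma3} gives its very precise description.

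First I would dispose of the easy reductions. If $G$ is $chair$-free then Theorem~\ref{chair} already gives $\chi(G)\le\omega+1$, so I may assume $G$ contains an induced $chair$; this should force some of the degenerate configurations away and let me invoke the detailed structure. I would then treat the cases where $\omega(\langle V_2\rangle)\le 3$ or $\omega(\langle X_3\rangle)<\omega$: here $\langle X_1\rangle$ and $\langle X_2\rangle$ are (complete) multipartite by Theorem~\ref{char}, $\langle X_3\rangle$ is a union of cliques by (\ref{lem22}) of Lemma~\ref{lemma2}, and Lemma~\ref{lemma} (\ref{twoI}) says the independent sets $I_k$ collapse into at most two classes. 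In each such case the cross edges $[X_i,X_j]$ are sparse enough (each vertex of a lower layer meeting at most one partition of a higher layer) that I can colour $\langle X_1\rangle$ optimally with $\omega$ colours, colour $\langle X_2\rangle$ reusing those colours while respecting the at-most-one-partition adjacency, and then absorb $X_3$ using the union-of-cliques structure, spending at most one extra colour.

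The crux is the tight case $\omega(\langle X_2\rangle)=\omega(\langle X_3\rangle)=\omega$ with $[X_2,X_3]\neq\emptyset$ and every vertex of $X_3$ meeting both $X_1$ and $X_2$, which is precisely where Lemma~\ref{lemma3} applies. There $X_3$ is a single clique $\{h_1,\dots,h_\omega\}$, $U'_t=\{z_1,z_2\}$ with $|N_{X_3}(z_1)|=\omega-1$ and $|N_{X_3}(z_2)|=1$, and $|U_p|\le 3$ with the exact neighbourhood pattern of $y_1,y_2,y_3$ on $X_3$ pinned down. I would use this to hand-build the colouring: assign colours $1,\dots,\omega$ to the clique $X_3$, then colour $z_1,z_2$ and the vertices $y_1,y_2,y_3$ of $U_p$ so that each avoids only the colours of its (few, explicitly known) $X_3$-neighbours, verifying via the adjacency relations of (\ref{lem36}) that two colours suffice to cover $U_p\cup U'_t$ consistently, and finally extend across the complete-multipartite parts $\langle X_1\rangle,\langle X_2\rangle$. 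The main obstacle I anticipate is bookkeeping: ensuring that the single ``extra'' colour $\omega+1$ is introduced at most once globally and that the colour classes chosen for $X_3$, $U_p$, and $U'_t$ remain simultaneously proper across all three families of cross edges. This is where the precise statements $z_1y_3,z_2y_2\in E(G)$ versus the non-edges in Lemma~\ref{lemma3}(\ref{lem36}), and the fact $N_{X_3}(y_1)\ne\emptyset$ from (\ref{lem34}), must be used carefully; I expect the verification to reduce to checking a small finite number of colour assignments on the $O(\omega)$ distinguished vertices, after which the remaining vertices colour greedily because they see at most $\omega$ colours.
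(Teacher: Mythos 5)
Your overall architecture coincides with the paper's: the same partition $V(G)=X_1\cup X_2\cup X_3$, the same case analysis on $\omega(\langle X_2\rangle)$ and $\omega(\langle X_3\rangle)$ relative to $\omega$, and the same reliance on Lemma~\ref{lemma}, Lemma~\ref{lemma2} and the fine structure of Lemma~\ref{lemma3} in the tight case. Your treatment of the slack cases matches the paper's in substance (there $\langle V_2\rangle$ is $(P_3\cup K_1)$-free, hence chair-free, so Theorem~\ref{chair} bounds $\chi(\langle V_2\rangle)$ by $\omega$, and Lemma~\ref{lemma}(\ref{twou}) lets the resulting independent sets be matched to colours); your opening reduction ``assume $G$ contains an induced chair'' is harmless but never used.

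The genuine gap is in the tight case. There $|X_2|=\omega+1$, $|X_3|=\omega$ and $\alpha(\langle V_2\rangle)=2$, so $\chi(\langle V_2\rangle)=\omega+1$: the extra colour $\omega+1$ must already be spent inside $V_2$. Your plan --- colour the clique $X_3$ with $1,\dots,\omega$, then fix up $z_1,z_2$ and $U_p$ with ``two colours'' and extend across $X_1,X_2$ --- does not confront the subcase $V_2\subseteq N(U_p)$. In that situation every vertex of $V_2$ has a neighbour in $U_p$, so if $U_p$ is monochromatic then $V_2$ has only $\omega$ permissible colours while it needs $\omega+1$; no local bookkeeping on the distinguished vertices can fix this counting obstruction. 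The paper's Case 3.2.2 resolves it by splitting $U_p$ itself into two colour classes (recolouring the distinguished vertex $y_1$ of Lemma~\ref{lemma3}(\ref{lem34}) with $\omega+1$), and the feasibility of that recolouring is precisely what Lemma~\ref{lemma3}(\ref{lem33})--(\ref{lem36}) ($|U_p|\le 3$ together with the pinned-down adjacencies between $U_p$, $U'_{t}$ and $X_3$) is needed for; the complementary subcase $V_2\nsubseteq N(U_p)$ must then be handled separately, since there a vertex of $V_2$ missed by $U_p$ can absorb colour $p$ instead. Without the realisation that the extra colour must also invade $X_1$, the tight case of your plan stalls.
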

\begin{proof}
Let $G$ be a $\{P_3\cup 2K_1,\overline{P_3\cup 2K_1}\}$-free graph with $\omega(G)\geq 4$. We know that $V(G)=V_1\cup V_2=X_1\cup X_2\cup X_3$.
By using (\ref{emptycij}) of Proposition \ref{k1k2kpprop}, one can see that $V_2=C_{1,2}\cup C_{1,3}\cup C_{2,3}$ and $X_3=C'_{1,2}\cup C'_{1,3}\cup C'_{2,3}$.

Let $\{1,2,\ldots,\omega(G)+1\}$ be the set of colors. Throughout the proof, when we say that `we assign a color to a set' or `there is a color available for a set', it means that we will assign the same color to all the vertices in that set. To establish a $(\omega+1)$-coloring for $G$, we begin by assigning the color $k$, to the vertices in $U_k$, for each $k\in[\omega]$. We shall divide the proof into cases depending upon $\omega(\langle V_2\rangle)$.

\noindent\textbf{Case 1} $\omega(\langle V_2\rangle)\leq \omega-1$.

By (i) of Lemma \ref{lemma2}, $\langle V_2\rangle$ is $\{P_3\cup K_1,\overline{P_3\cup 2K_1}\}$-free , and hence by Theorem \ref{chair}, we can partition $V_2$ into $\omega$ independent sets say, $S_1,S_2,\ldots S_{\omega}$.
By (\ref{twou}) of Lemma \ref{lemma}, we see that each $S_{\ell}$, $\ell\in [\omega-1]$ can be assigned a unique color from $[\omega+1]$.
Finally in $S_{\omega}$, each vertex is adjacent to at most $(\omega-1)$ colors in $V_2$ and one color in $V_1$.
Hence $G$ is $(\omega+1)$-colorable. 

\noindent\textbf{Case 2} $\omega(\langle V_2\rangle)= \omega\geq 4$ and $\mathop\cup\limits_{r=1}^{|A'|}I_r'\nsubseteq I_t'$, for any $t\in [|A'|]$. 

Since $\omega(\langle V_2\rangle)= \omega\geq 4$, $\langle X_2\rangle$ is a complete multipartite graph and since $\mathop\cup\limits_{r=1}^{|A'|}I_r'\nsubseteq I_t'$, for any $t\in [|A'|]$, by (iii) of Lemma \ref{lemma2},   $X_3=\emptyset$. Therefore, $\langle V_2\rangle$ is a complete multipartite graph with $\omega$-partites and as in Case 1,  $G$ will be $(\omega+1)$-colorable.

\noindent\textbf{Case 3} $\omega(\langle V_2\rangle)= \omega\geq 4$ and $\mathop\cup\limits_{r=1}^{|A'|}I_r'\subseteq I_t'$, for some $t\in [|A'|]$.

Here also, $\langle X_2\rangle$ is a complete multipartite graph. Let us further subdivide this depending on $\omega(\langle X_3\rangle)$.

\noindent\textbf{Case 3.1} $\omega(\langle X_3\rangle)\leq \omega-1$
 
Let us assign the color $(\omega+1)$ to $U'_{t}$ and assign a distinct color from $[\omega]$ to each of the vertex in $X_2\backslash U'_{t}$. Since $\omega(\langle X_3\rangle)\leq \omega-1$, by (iii) of Proposition \ref{k1k2kpprop}, for every vertex in $X_3$, there exists a proper color from $[\omega+1]$  and hence $G$ is $(\omega+1)$-colorable.

\noindent\textbf{Case 3.2} $\omega(\langle X_3\rangle)=\omega$

If $[X_3,X_2]=\emptyset$, then we can color $X_2$ as in Case 3.1 and since each vertex in $X_3$ is adjacent to at most $\omega-1$ colors in $X_3$, at most one color in $X_1$ and no vertex in $X_2$, each vertex in $X_3$ has an available color in $[\omega+1]$  and thus $G$ is $(\omega+1)$-colorable. Hence let us assume that $[X_3,X_2]\neq \emptyset$.  Then by (iv) of Lemma \ref{lemma2}, $\langle X_3 \rangle$ is a clique.
If there exists a vertex $a\in X_3$ such that $[a,X_2]=\emptyset$ or $[a,X_1]=\emptyset$, then without much difficulty, we can give a $(\omega+1)$- coloring to $G$, by coloring the vertices in $(X_2\cup X_3)$ in such a way that $a$ is the last vertex that is colored. 
Likewise, if $I'_{t}=\emptyset$, then $\langle X_3\rangle$ and $\langle X_2\rangle$ are cliques of size $\omega$ and hence as in Case 1, 
we can show that $G$ is $(\omega+1)$-colorable.

Hence the remaining case to be considered has $\omega(\langle X_2\rangle)=\omega(\langle X_3\rangle)=\omega(G)$,   $\mathop\cup\limits_{r=1}^{|A'|}I_r'\subseteq I_t'$, for some $t\in [|A'|]$ with $I_t'\neq\emptyset$, $[X_2,X_3]\neq\emptyset$ and for every $a\in X_3$, $[a,X_2]\neq \emptyset\neq[a,X_1]$.

Here,  $\langle X_3\rangle$ will be a clique.
Let $X_3=\{h_1,h_2,\ldots h_{\omega}\}$. By (i) of Lemma \ref{lemma3}, $|U'_{t}|=2$, say $U'_{t}=\{z_1,z_2\}$, where $N_{X_3}(z_1)=\mathop\cup\limits_{r=1}^{\omega-1} h_r$ and $N_{X_3}(z_2)=h_\omega$.
As a result, $\alpha(\langle V_2\rangle)=2$ and $|V_2|=2\omega+1$. Hence by (\ref{lem21}) of Lemma \ref{lemma2} and Theorem \ref{chair}, $\chi(\langle V_2\rangle)=\omega(G)+1$. 
Next, by (ii) of Lemma \ref{lemma3}, $\mathop\cup\limits_{k=1}^{\omega} I_k\subseteq I_p$, for some $p\in[\omega]$. Let us further divide this case into two more cases namely, $V_2\nsubseteq N(U_p)$ or $V_2\subseteq N(U_p)$.

\noindent\textbf{Case 3.2.1} $V_2\nsubseteq N(U_p)$

Let us begin by considering the case when $X_3\nsubseteq N(U_p)$. Without loss of generality, let us assume that $h_i\notin N(U_p)$, $i\in [\omega]$.
Since $[a,X_1]\neq\emptyset$, for every $a\in X_3$ and $\omega(\langle X_3\rangle)=\omega$, there exist vertices $v_s\in V_1$ and $h_j\in X_3$, $j\in [\omega]\backslash\{i\}$ such that $h_iv_s\in E(G)$ and $h_jv_s\notin E(G)$.
To establish an $(\omega+1)$-coloring, we color $X_2$ as done in Case 3.1. 
By (i) of Lemma \ref{lemma3}, $[X_3,X_2\backslash U'_{t}]=\emptyset$ and hence we can assign the color $s$ to $h_{j}$
 and we can properly color the vertices in $X_3\backslash\{h_i,h_j\}$. 
Finally for $h_i$, it is adjacent to the color $\omega+1$ (in $X_2$), the color $s$ in $X_1$ and $\omega-1$ colors in $X_3$ which includes the color $s$. Hence $G$ is $(\omega+1)$-colorable.
Next let us assume that $X_3\subseteq N(U_p)$. Then $X_2\nsubseteq N(U_p)$ and $|U_p|\geq 2$ (by (\ref{lem25}) of Lemma \ref{lemma2}).
From (ii) of Lemma \ref{lemma3}, we see that $[U_p,U'_{t}]\neq \emptyset$. Without loss of generality,  let $z_2\in N(U_p)$. Let us complete the coloring depending upon where the non-neighbors of $U_p$ are in  $X_2$.

\noindent\textbf{Case 3.2.1.1} $X_2\backslash U'_{t}\nsubseteq N(U_p)$

Let $a\in X_2\backslash U'_{t}$ such that $a\notin N(U_p)$. Define $S_1=\{z_1,h_\omega\}$, $S_\omega=\{a\}$ and $S_{\omega+1}=\{z_2,h_1\}$. Clearly $S_1, S_{\omega}, S_{\omega+1}$ are independents.  Since $[X_3,X_2\backslash U_t']=\emptyset$, we can partition the remaining vertices of $X_2\cup X_3$ into $\omega-2$ independent sets say $S_2, S_3,\ldots, S_{\omega-1}$. 
Since $X_3\subseteq N(U_{p})$, each $S_{\ell}$, $\ell\in [\omega-1]$ is adjacent to at most one color from $[\omega+1]\backslash\{p\}$ in $X_1$.
Hence we can assign a unique color from $[\omega+1]\backslash\{p\}$ to each $S_{\ell}, \ell\in [\omega-1]$.
Next we can assign the color $p$ to $S_{\omega}$.
Finally, for $S_{\omega+1}$, since $S_{\omega+1}\subseteq N(U_p)$ and the vertices in $S_{\omega+1}$ are adjacent to at most $\omega-1$  colors (other than $p$) in $V_2$, there is a color available from $[\omega+1]$ for $S_{\omega+1}$.
Hence $G$ is $(\omega+1)$-colorable.

\noindent\textbf{Case 3.2.1.2} $X_2\backslash U'_{t}\subseteq N(U_p)$

Since $X_2\backslash U'_{t}\subseteq N(U_p)$, $z_2\in N(U_p)$ and $X_2\nsubseteq U_p$, we get that $z_1\notin N(U_p)$.
Let us begin the $(\omega+1)$-coloring by coloring $z_1$ with $p$ and  $\{z_2,h_1\}$ with color $\omega+1$.
Since each vertex of $X_2\backslash U'_{t}$ is adjacent only to color $p$ in $X_1$ and $\omega$ colors in $X_2$ which includes $p$, we can properly color the vertices in $X_2\backslash U'_{t}$.
Finally for the vertices in $X_3$, since each vertex in $X_3$ is adjacent to one color in $X_1$, one color in $X_2$ and at most  $\omega-1$ colors in $X_3$ and the color of $h_1$ and $z_2$ are same, we can get a proper coloring for the vertices in $X_3$ by finally coloring the vertex $h_\omega$. Hence $G$ is $(\omega+1)$-colorable.

\noindent\textbf{Case 3.2.2} $V_2\subseteq N(U_p)$

As mentioned earlier, $\chi(\langle V_2\rangle)=\omega+1$.
Since $V_2\subset N(U_p)$, we have only $\omega$-colors remaining in $[\omega+1]\backslash\{p\}$ to color the vertices of $X_2\cup X_3$.
Hence we have to reassign the colors of some vertices in $U_p$ with $\omega+1$.
Let us reassign the vertex $y_1$ with color $\omega+1$, where $y_1$ is as defined in (\ref{lem34}) of Lemma \ref{lemma3}.
By (\ref{lem36}) of Lemma \ref{lemma3}, we know that $|U_p|\leq 3$.
Hence to establish an $(\omega+1)$-coloring of $G$, we consider the cases $|U_p|=3$ and $|U_p|=2$ independently.
Let us start by considering $|U_p|=3$, where $U_{p}=\{y_1,y_2,y_3\}$ and its properties are as defined in (\ref{lem36}) of Lemma \ref{lemma3}. We begin by assigning the color $\omega+1$ to $z_2$ and the color $s$ to $z_1$, where $s\in [\omega+1]\backslash\{\omega+1,p\}$.
By using (\ref{lem34}) of Lemma \ref{lemma3}, each vertex in $X_2\backslash U'_{t}$ is adjacent to only the colors $s$ and $\omega+1$ in $(U'_{t}\cup X_1)$ and as a result there is a color available from $[\omega+1]$ for all the vertices in $X_2\backslash U'_{t}$.
Finally for $X_3$, we first assign the color $s$ to $h_{\omega}$. Since each vertex in $X_3\backslash\{h_{\omega}\}$ is adjacent to the color $s$  in $X_2$, the color $p$ in $X_1$ and at most $\omega-1$ colors in $X_3$ ( including the color $s$), there is a color available from $[\omega+1]$ for all the vertices in $X_3\backslash\{h_{\omega}\}$.
Next, let us consider the case when $|U_p|=2$ and let $U_{p}=\{y_1,y_2\}$.
Since $V_2\subset N(U_p)$, we get that $[y_2,U'_t]$ is complete and hence by (\ref{lem33}) of Lemma \ref{lemma3}, $y_2h_{\omega}\in E(G)$ and $[y_1,\mathop\cup\limits_{r=1}^{\omega-1}h_r]$ is complete.  To establish a $(\omega+1)$-coloring for $G$, let us begin by assigning the color $\omega+1$ to $\{z_1,z_2\}$.
One can see that there is a color available for each vertex in $X_2\backslash U'_{t}$.
Since the vertices in $X_3\backslash\{h_\omega\}$ are adjacent only to the color $\omega+1$ in $X_2$ and $X_1$, we can get a proper coloring for the vertices in $X_3$  by first coloring the vertex $h_{\omega}$ and then by coloring the vertices in $X_3\backslash\{h_{\omega}\}$. 
Hence $G$ is $(\omega+1)$-colorable.
\end{proof}

Since $(P_3\cup 2K_1)$ is an induced subgraph of the $H$ graph (see Figure \ref{somesplgraphs}), by Theorem \ref{3color} and Theorem \ref{main}, we get  Corollary \ref{final}.

\begin{corollary}\label{final}
If $G$ is a $\{P_3\cup 2K_1,\overline{P_3\cup 2K_1}\}$-free graph with $\omega(G)\neq 3$, then $\chi(G)\leq \omega+1$. 
\end{corollary}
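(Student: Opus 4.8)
The plan is to deduce Corollary \ref{final} directly from Theorem \ref{main} together with the triangle-free case, splitting on the value of $\omega(G)$. Since $\omega(G)\neq 3$, exactly three regimes remain: $\omega(G)=1$, $\omega(G)=2$, and $\omega(G)\geq 4$. The last regime is handled verbatim by Theorem \ref{main}, which already gives $\chi(G)\leq\omega(G)+1$, so no further work is needed there. The genuine content of the corollary therefore lives entirely in the two small-clique cases, and the whole proof is a short case analysis.

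First I would dispose of $\omega(G)=1$: such a graph is edgeless, so $\chi(G)=1=\omega(G)$, which certainly satisfies $\chi(G)\leq\omega(G)+1$. Next, for $\omega(G)=2$, the graph $G$ is triangle-free. The key observation here is the parenthetical remark just before the corollary: $(P_3\cup 2K_1)$ is an induced subgraph of the $H$ graph of Figure \ref{somesplgraphs}. Consequently every $(P_3\cup 2K_1)$-free graph is automatically $H$-free. Since $G$ is triangle-free and $H$-free, Theorem \ref{3color} applies and yields $\chi(G)\leq 3=\omega(G)+1$. This is the linchpin: the embedding $(P_3\cup 2K_1)\sqsubseteq H$ is precisely what lets one invoke the Randerath-type bound for triangle-free graphs.

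Assembling the three cases, in every instance $\chi(G)\leq\omega(G)+1$, which is the claimed bound. I would write this as: ``If $\omega(G)\leq 2$ then $G$ is triangle-free, $G$ is $H$-free because $(P_3\cup 2K_1)\sqsubseteq H$, and so $\chi(G)\leq 3\leq\omega(G)+1$ by Theorem \ref{3color}; if $\omega(G)\geq 4$ then $\chi(G)\leq\omega(G)+1$ by Theorem \ref{main}.''

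I do not expect any serious obstacle, since both heavy lifts (the triangle-free estimate and the $\omega\geq 4$ estimate) are quoted results. The only point requiring a moment's care is confirming that $(P_3\cup 2K_1)$ really is an induced subgraph of $H$ so that $H$-freeness follows from $(P_3\cup 2K_1)$-freeness; this is asserted in the text and visible in Figure \ref{somesplgraphs}, so it can be cited rather than reproved. The case $\omega(G)=3$ is deliberately excluded from the statement, which is why the partition into $\omega\leq 2$ and $\omega\geq 4$ is exhaustive and the argument closes cleanly.
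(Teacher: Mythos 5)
Your proposal is correct and matches the paper's own (one-line) derivation exactly: the $\omega\geq 4$ case is Theorem \ref{main}, and the $\omega\leq 2$ case follows from Theorem \ref{3color} via the observation that $(P_3\cup 2K_1)\sqsubseteq H$ forces $H$-freeness. No gaps; the explicit handling of $\omega(G)=1$ is a harmless addition the paper leaves implicit.
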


To show that the bound in Corollary \ref{final} is tight, we can consider the following examples.
For $\omega=2$, one can see that $C_5$ is an example and for $\omega\geq 4$, we can consider the graph induced by $V_2$ in Case 3.2.1 of Theorem \ref{main}.
Let us call it $G^*$, where $V(G^*)=\{h_1,h_2,\ldots,h_\omega\}\cup \{z_1,z_2,y_1,y_2,\ldots,y_{\omega-1}\}$, with $\omega\geq 4$ and $E(G)= \{\{h_ih_j\} \cup \{y_ry_s\}\cup \{z_1y_r\}\cup \{z_2y_r\}\cup \{z_1h_s\}\cup \{z_2h_\omega\}$, where $1\leq i,j\leq \omega$ and $1\leq r,s\leq \omega-1\}$.
It is not difficult to see that $G^*$ is a $\{P_3\cup 2K_1,\overline{P_3\cup 2K_1}\}$-free graph, with $|V(G^*)|=2\omega+1$, $\alpha(G^*)=2$, $\omega(G^*)=\omega$ and hence $\chi(G^*)=\omega(G^*)+1$.

Finally, let us consider $\{P_3\cup 2K_1,\overline{P_3\cup 2K_1}\}$-free graphs with $\omega=3$. Here we can give an example to show that if $f$ is a $\chi$-binding function for the class of $\{P_3\cup 2K_1,\overline{P_3\cup 2K_1}\}$-free graphs then $f(3)\geq 6$.
For instance, consider $G^{**}$ to be a circulant graph on $17$ vertices such that all $17$ vertices form a cycle with all chords of length $1,2,4$ and $8$ (see, \textcolor{blue}{http://www.cut-the-knot.org/ arithmetic/combinatorics/Ramsey44.shtml}).
Note that $G^{**}$ is a $8$-regular graph with $\omega(G^{**})=\alpha(G^{**})=3$ and $\chi(G^{**})=6$ (see, \textcolor{blue}{https://houseofgraphs.org/graphs/30395}).
Since $\omega(G^{**})=\alpha(G^{**})=3$, $G^{**}$ is $\{P_3\cup 2K_1,\overline{P_3\cup 2K_1}\}$-free with $\chi(G^{**})=6$. In theorem \ref{3case}, we shall show that the optimal $\chi$-binding function $f^*$ for the class of $\{P_3\cup 2K_1,\overline{P_3\cup 2K_1}\}$-free graphs satisfies $f^*(3)\leq 7$.  

\begin{theorem}\label{3case}
If $G$ is a $\{P_3\cup 2K_1,\overline{P_3\cup 2K_1}\}$-free graph with $\omega(G)=3$, then $\chi(G)\leq 7$.
\end{theorem}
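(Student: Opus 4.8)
The plan is to first simplify the hypothesis, then reduce to colouring $\langle V_2\rangle$, and finally confront the one genuinely hard configuration. Since $\omega(\overline{P_3\cup 2K_1})=4$, no graph with $\omega=3$ can contain $\overline{P_3\cup 2K_1}$ as an induced subgraph; hence for $\omega(G)=3$ the hypothesis that $G$ is $\overline{P_3\cup 2K_1}$-free is automatic, and I would only use that $G$ is $(P_3\cup 2K_1)$-free with $\omega(G)=3$. Because $(P_3\cup 2K_1)\sqsubseteq H$, the graph $G$ and each of its induced subgraphs is $H$-free, so by Theorem \ref{3color} \emph{every triangle-free induced subgraph of $G$ is $3$-colorable}. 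This is the workhorse that must replace Proposition \ref{k1k2kpprop}, which is stated only for $\omega\ge 4$ and is unavailable here.

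Next I would use the partition $V(G)=V_1\cup V_2$. As $U_1,U_2,U_3$ are independent sets, $\langle V_1\rangle$ is $3$-colorable, so it suffices to prove $\chi(\langle V_2\rangle)\le 4$: colouring $V_1$ and $V_2$ with disjoint palettes then gives $\chi(G)\le 3+4=7$. If $\omega(\langle V_2\rangle)\le 2$, then $\langle V_2\rangle$ is triangle-free and $H$-free, so Theorem \ref{3color} yields $\chi(\langle V_2\rangle)\le 3\le 4$ and we are done. Thus the whole difficulty is concentrated in the case $\omega(\langle V_2\rangle)=3$.

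For $\omega(\langle V_2\rangle)=3$ I would exploit the defining feature of $V_2$: every vertex of $V_2$ is adjacent to at most one vertex of the maximum clique $A=\{v_1,v_2,v_3\}$. Accordingly I would partition $V_2=D_0\cup D_1\cup D_2\cup D_3$, where $D_i$ (for $i\in[3]$) is the set of vertices of $V_2$ adjacent to $v_i$, and $D_0$ is the set anticomplete to $A$. Each $\langle D_i\rangle$ with $i\ge 1$ is then triangle-free, since a triangle in $D_i$ together with $v_i$ would be a $K_4$. The control on edges within and between the $D_i$'s and $D_0$ should come from the following reformulation of $(P_3\cup 2K_1)$-freeness: for every non-adjacent pair $\{u,w\}$, the set of vertices adjacent to neither $u$ nor $w$ induces a disjoint union of cliques, each of size at most $3$ (otherwise a $P_3$ in that set together with $\{u,w\}$ is an induced $P_3\cup 2K_1$). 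Applying this with $u,w$ chosen among $A$, the $I_k$'s, and $V_2$ itself should force $\langle V_2\rangle$ to decompose into a triangle-free part (coloured with three colours via Theorem \ref{3color}) plus a part that can be absorbed by one extra colour, giving the required $4$-colouring.

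The \textbf{main obstacle} is exactly this last case. The $\omega\ge 4$ toolkit (Proposition \ref{k1k2kpprop} and Lemmas \ref{lemma}--\ref{lemma3}) is genuinely false at $\omega=3$; for instance $\langle V_2\rangle$ need not be $(P_3\cup K_1)$-free, because with only three vertices $v_1,v_2,v_3$ one cannot always find the fifth vertex used in the $\omega\ge 4$ arguments. Hence the $4$-colouring of $V_2$ cannot be imported and must be produced by a self-contained case analysis driven by the union-of-cliques reformulation and Theorem \ref{3color}. That the structural hypothesis on $V_2$ is indispensable is clear from $G^{**}$: a general $(P_3\cup 2K_1)$-free graph with $\omega=3$ can have $\chi=6$, so any proof of $\chi(\langle V_2\rangle)\le 4$ must use that the vertices of $V_2$ miss at least two vertices of $A$. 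I expect the bookkeeping of the adjacencies between $D_0,D_1,D_2,D_3$ and $A$ (and, where needed, a second application of the same partition inside $V_2$) to be the most laborious step, with the gap between the bound $7$ and the best known example $\chi=6$ reflecting the slack in this analysis.
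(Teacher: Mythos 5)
Your preliminary reductions are sound: $\overline{P_3\cup 2K_1}$ contains a $K_4$, so that half of the hypothesis is vacuous when $\omega(G)=3$; $\langle V_1\rangle$ is $3$-colorable because each $U_k$ is independent; and Theorem \ref{3color} disposes of the case $\omega(\langle V_2\rangle)\leq 2$. The partition of $V_2$ into $D_0,D_1,D_2,D_3$ with each $\langle D_i\rangle$ ($i\geq 1$) triangle-free, and the observation that for any non-adjacent pair the common non-neighborhood induces a $P_3$-free graph, are also correct. But the proof stops exactly where the theorem begins: for the case $\omega(\langle V_2\rangle)=3$ you assert that these tools ``should force'' $\langle V_2\rangle$ to split into a triangle-free part plus one extra color class, and you explicitly defer the ``bookkeeping.'' That four-coloring of $\langle V_2\rangle$ is the entire content of the result, and nothing in the proposal produces it. Worse, the target you have set yourself --- $\chi(\langle V_2\rangle)\leq 4$ with a palette disjoint from that of $V_1$ --- is strictly stronger than what the paper establishes, so you cannot expect to close the gap by importing the paper's intermediate claims: the paper never $4$-colors $\langle V_2\rangle$ on its own.

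The paper's route is different and worth contrasting. It groups $V_2$ with pieces of $V_1$ and recycles the colors of $A$: since every vertex of $C_{1,2}\cup C_{1,3}\cup I_1$ misses $v_1$, that set is $(P_3\cup K_1)$-free with clique number at most $3$, hence $4$-colorable by Theorem \ref{chair} (using color $1$ plus three new colors); similarly $C_{2,3}\cup I_2$ is $(P_3\cup K_1)$-free with clique number at most $2$, hence $3$-colorable, and $I_3$ reuses color $3$. The only delicate point is when $\chi(\langle C_{2,3}\cup I_2\rangle)=3$, where only the two colors $\{2,4\}$ remain for it; the paper then shows via Theorem \ref{spgt} that this set is exactly a $C_5$, and that some vertex of this $C_5$ has no neighbor in $U_3$ and can therefore be moved into the color class of $I_3$. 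Your union-of-cliques reformulation and Theorem \ref{3color} do not by themselves yield anything like this $C_5$ analysis, and until the case $\omega(\langle V_2\rangle)=3$ is actually carried out (or the claim $\chi(\langle V_2\rangle)\leq 4$ is verified), the argument is incomplete.
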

\begin{proof}
Let $G$ be a $\{P_3\cup 2K_1,\overline{P_3\cup 2K_1}\}$-free graph with $\omega(G)=3$ and $V(G)=A\cup I_1\cup I_2\cup I_3\cup C_{1,2}\cup C_{1,3}\cup C_{2,3}$.
Let $\{1,2,\ldots,7\}$ be the set of colors.
We begin by observing the fact that $\langle C_{1,2}\cup C_{1,3}\cup I_1\rangle$ and $\langle C_{2,3}\cup I_2\rangle$ are  $(P_3\cup K_1)$-free (otherwise the $(P_3\cup K_1)\sqsubseteq \langle C_{1,2}\cup C_{1,3}\cup I_1\rangle ($ or $\langle C_{2,3}\cup I_2\rangle)$ together with $v_1 ($ or $v_2)$ forms a $P_3\cup 2K_1$, a contradiction).
One can also see that $\omega(\langle C_{1,2}\cup C_{1,3}\cup I_1\rangle)\leq 3$ and $\omega(\langle C_{2,3}\cup I_2\rangle)\leq 2$ and hence by Theorem \ref{chair}, $\chi(\langle C_{1,2}\cup C_{1,3}\cup I_1\rangle)\leq 4$ and $\chi(\langle C_{2,3}\cup I_2\rangle)\leq 3$.
To color the vertices in $V(G)$, we start by assigning the color $i$ to $v_i$, for $i=1,2,3$.
If $\chi(\langle C_{2,3}\cup I_2\rangle)\leq 2$, then we can show that $G$ is $7$-colorable by coloring the vertices in $I_3$, $(C_{2,3}\cup I_2)$ and $(C_{1,2}\cup C_{1,3}\cup I_1)$ with colors $3$, $\{2,4\}$ and $\{1,5,6,7\}$ respectively.
Hence let us consider the case when $\chi(\langle C_{2,3}\cup I_2\rangle)=3$.
Since $\langle C_{2,3}\cup I_2\rangle$ is $\{P_3\cup K_1,\overline{P_3\cup 2K_1}\}$-free with $\omega(\langle C_{2,3}\cup I_2\rangle)=2$ and $\chi(\langle C_{2,3}\cup I_2\rangle)=3$, by Theorem \ref{spgt}, there exists a $C_5\sqsubseteq \langle C_{2,3}\cup I_2\rangle$.
To show that $G$ is $7$-colorable it is enough to show that $\langle C_{2,3}\cup I_2\rangle\cong C_5$ and that there exists a vertex, say $a\in (C_{2,3}\cup I_2)$ such that $[a,U_3]=\emptyset$ (Here one can attain a $7$-coloring for $G$ by coloring the vertices in $(a\cup I_3)$, $(C_{2,3}\cup I_2)\backslash\{a\}$ and $(C_{1,2}\cup C_{1,3}\cup I_1)$ with colors $3$, $\{2,4\}$ and $\{1,5,6,7\}$ respectively).
Let us begin by showing that $\langle C_{2,3}\cup I_2\rangle\cong C_5$.
As mentioned earlier, there exists a $C_5\sqsubseteq \langle C_{2,3}\cup I_2\rangle$, let $V(C_5)=\{z_1,z_2,\ldots,z_5\}$.
Suppose there exists a vertex $b\in (C_{2,3}\cup I_2)\backslash V(C_5)$.
Since $\langle C_{2,3}\cup I_2 \rangle$ is $(P_3\cup K_1)$-free, it is not difficult to see that $|[b,V(C_5)]|\geq 2$.
If $b$ is adjacent to a pair of adjacent vertices in $C_5$ then $\omega(\langle v_1\cup b \cup V(C_5)\rangle)=4$, a contradiction.
Therefore, without loss of generality, let us assume that $bz_1,bz_3\in E(G)$, then $\langle\{b,z_1,z_2,z_4\}\rangle\cong P_3\cup K_1$, again a contradiction.
Hence $\langle C_{2,3}\cup I_2\rangle\cong C_5$.
Next, let us show that there exists a vertex in $(C_{2,3}\cup I_2)$ that has no neighbors in $U_3$.
On the contrary, let us assume that $V(C_5)\subseteq N(U_3)$.
Again, by using similar arguments one can see that each vertex in $U_3$ is adjacent to at most a non-adjacent pair of vertices in $C_5$.
Hence as a result we get that $|U_3|\geq 3$.
Let $y_1,y_2\in I_3$.
Note that $|V(C_5)\cap I_2|\leq 2$ (since $I_2$ is an independent set).
If $|V(C_5)\cap I_2|\leq 1$, then there exists a $P_4\in \langle C_{2,3}\cap V(C_5)\rangle$ such that $P_3\cup 2K_1\sqsubseteq\langle\{v_3,v_2,y_1,y_2,V(P_4)\}\rangle$, a contradiction.
Hence $\langle V(C_5)\cap I_2\rangle\cong 2K_1$ and as result $\langle V(C_5)\cap C_{2,3} \rangle\cong K_2\cup K_1$.
Let us assume that $(V(C_5)\cap I_2)=\{z_2,z_5\}$ and $(V(C_5)\cap C_{2,3})=\{z_1,z_3,z_4\}$.
One can see that $[v_3,\{z_1,z_3,z_4\}]=\emptyset$ 
$|[y_k,\{z_3,z_4\}]|\leq 1$, for $k=1,2$. 
 Thus $z_1y_1,z_1y_2\in E(G)$ (if say, $z_1y_1\notin E(G)$, then $P_3\cup 2K_1\sqsubseteq\langle\{y_1,v_2,v_3,z_1,z_3,z_4\}\rangle$, a contradiction.
We get a similar contradiction if $z_1y_2\notin E(G)$).
As a result, $[\{z_2,z_5\},\{y_1,y_2\}]=\emptyset$ and  $\langle\{z_2,v_3,z_5,y_1,y_2\}\rangle\cong P_3\cup 2K_1$, a contradiction.
Thus there exists a $z_{\ell}\in V(C_5)$ such that $[z_{\ell},U_3]=\emptyset$ and hence $G$ is 7-colorable.
\end{proof}
 
%

\bibliographystyle{ams}
\bibliography{ref}
\end{titlepage}
\listoftodos
\end{document}